\documentclass{amsart}
\usepackage{mathrsfs}
\usepackage{stmaryrd,mathtools}
\usepackage{enumerate}
\usepackage{tikz-cd}
\usepackage[all]{xy}
\usepackage{aliascnt}

\usepackage{shuffle}
\usepackage{ytableau}
\newcommand{\blank}{\phantom{2}}

\usepackage[colorlinks=true, linkcolor=blue, citecolor=blue]{hyperref}
\usepackage{fullpage}
\usepackage{verbatim}
\usepackage{amssymb}

\newtheorem{theorem}{Theorem}[section]

\newaliascnt{lemma}{theorem}

\aliascntresetthe{lemma}

\newaliascnt{corollary}{theorem}
\newtheorem{corollary}[corollary]{Corollary}
\aliascntresetthe{corollary}

\newaliascnt{proposition}{theorem}
\newtheorem{proposition}[proposition]{Proposition}
\aliascntresetthe{proposition}

\newaliascnt{potato}{theorem}

\aliascntresetthe{potato}

\newaliascnt{definitionlemma}{theorem}

\aliascntresetthe{definitionlemma}

\newaliascnt{conjecture}{theorem}

\aliascntresetthe{conjecture}

\newaliascnt{question}{theorem}

\aliascntresetthe{question}

\theoremstyle{definition}

\newaliascnt{definition}{theorem}
\newtheorem{definition}[definition]{Definition}
\aliascntresetthe{definition}

\newaliascnt{remark}{theorem}

\aliascntresetthe{remark}

\newaliascnt{example}{theorem}

\newenvironment{example}
  {\pushQED{\qed}\examplex}
  {\popQED\endexamplex}
\aliascntresetthe{example}

\newaliascnt{notation}{theorem}

\aliascntresetthe{notation}

\newcommand{\arxiv}[1]{\href{https://arxiv.org/abs/#1}{\texttt{arXiv:#1}}}

\definecolor{darkblue}{rgb}{0.6,0,0.1}

\newcommand{\newword}[1]{\textcolor{darkblue}{\textbf{\emph{#1}}}}

\newcommand*{\Scale}[2][4]{\scalebox{#1}{$#2$}}%

\usepackage{tikz}
\usetikzlibrary{calc, shapes, backgrounds,arrows,positioning,plotmarks}
\tikzset{>=stealth',
  head/.style = {fill = white, text=black},
  plaque/.style = {draw, rectangle, minimum size = 10mm}, 
  pil/.style={->,thick},
  junct/.style = {draw,circle,inner sep=0.5pt,outer sep=0pt, fill=black}
  }

\newcommand{\Z}{\mathbb{Z}}

\newcommand{\C}{\mathbb{C}}
\newcommand{\TT}{\mathbb{T}}

\newcommand{\bP}{\mathbb{P}}
\newcommand{\bA}{\mathbb{A}}

\newcommand{\cO}{\mathcal{O}}

\newcommand{\cC}{\mathcal{C}}

\newcommand{\mfm}{\mathfrak{m}}


\DeclareMathOperator{\QSym}{QSym}

\DeclareMathOperator{\Spec}{Spec}
\DeclareMathOperator{\Proj}{Proj}

\DeclareMathOperator{\im}{im}

\DeclareMathOperator{\wt}{wt}

\newcommand{\bx}{\mathbf{x}}
\newcommand{\by}{\mathbf{y}}

\makeatletter
\@namedef{subjclassname@2020}{%
  \textup{2020} Mathematics Subject Classification}
\makeatother


\begin{document}

\title{James reduced product schemes and double quasisymmetric functions}

\author{Oliver Pechenik}
\thanks{OP was partially supported by a Discovery Grant (RGPIN-2021-02391) and Launch Supplement (DGECR-2021-00010) from the Natural Sciences and Engineering Research Council of Canada.}
\address[OP]{Department of Combinatorics \& Optimization, University of Waterloo, Waterloo ON N2L3G1, Canada}
\email{oliver.pechenik@uwaterloo.ca}

\author{Matthew~Satriano}
\thanks{MS was partially supported by a Discovery Grant from the
  National Science and Engineering Research Council of Canada and a Mathematics Faculty Research Chair from the University of Waterloo.}
\address[MS]{Department of Pure Mathematics, University
  of Waterloo, Waterloo ON N2L3G1, Canada}
\email{msatrian@uwaterloo.ca}

\date{\today}
\keywords{James reduced product, equivariant cohomology, quasisymmetric function, Schubert calculus}
\subjclass[2020]{05E05, 05E14, 14N15, 55N91}

\begin{abstract}
Symmetric function theory is a key ingredient in the Schubert calculus of Grassmannians. Quasisymmetric functions are analogues that are similarly central to algebraic combinatorics, but for which the associated geometry is poorly developed. Baker and Richter (2008) showed that $\QSym$ manifests topologically as the cohomology ring of the loop suspension of infinite projective space or equivalently of its combinatorial homotopy model, the James reduced product $J\C\bP^\infty$. In recent work, we used this viewpoint to develop topologically-motivated bases of $\QSym$ and initiate a Schubert calculus for $J\C\bP^\infty$ in both cohomology and $K$-theory.

Here, we study the torus-equivariant cohomology of $J\C\bP^\infty$. We identify a cellular basis and introduce double monomial quasisymmetric functions as combinatorial representatives, analogous to the factorial Schur functions and double Schubert polynomials of classical Schubert calculus. We also provide a combinatorial Littlewood--Richardson rule for the structure coefficients of this basis.

Furthermore, we introduce an algebro-geometric analogue of the James reduced product construction. In particular, we prove that the James reduced product of a complex projective variety also carries the structure of a projective variety.
\end{abstract}
%
%

\maketitle

\numberwithin{theorem}{section}
\numberwithin{lemma}{section}
\numberwithin{corollary}{section}
\numberwithin{proposition}{section}
\numberwithin{conjecture}{section}
\numberwithin{question}{section}
\numberwithin{remark}{section}
\numberwithin{definition}{section}
\numberwithin{example}{section}
\numberwithin{notation}{section}
\numberwithin{equation}{section}

\section{Introduction}
\label{sec:intro}
For $X$ a pointed topological space satisfying mild hypotheses, the \newword{James reduced product} $JX$ (due to I.~James \cite{James}) is a combinatorial homotopy model for the loop suspension $\Omega \Sigma X$ and plays an important role in homotopy theory. The points of $JX$ are the elements of the free monoid on $X$ with the basepoint $e$ as neutral element. That is, 
\[
JX = \Bigg( \coprod_{i \geq 0} X^i \Bigg) / {\sim},
\]
where 
\[
(x_1, \dots x_k) \sim (y_1, \dots, y_\ell)
\]
if and only if $(x_1, \dots x_k)$ and $(y_1, \dots, y_\ell)$ differ by inserting and/or removing instances of the basepoint $e$. The James reduced product $JX$ has a filtration 
\[
J_0X \subset J_1X \subset \dots \subset JX,
\]
where $J_kX$ consists of those points of $JX$ that can be represented by a list of length $k$. For a textbook treatment of James reduced products, see \cite[VII.2]{Whitehead}.

Here, we introduce an algebro-geometric analogue of the James reduced product construction, which allows us to apply powerful tools of algebraic geometry. Our main geometric result is the following.

\begin{theorem}\label{thm:JnPm-intro}
If $X$ is a complex quasi-projective (resp.\ projective, affine) variety, then $J_nX$ also carries the structure of a quasi-projective (resp.\ projective, affine) variety, while $JX$ carries the structure of an ind-variety.
\end{theorem}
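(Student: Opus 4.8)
The plan is to construct $J_nX$ by induction on $n$ and then set $JX:=\varinjlim_n J_nX$ along the resulting filtration, which is an ind-variety by definition. For $n=0$, put $J_0X=\Spec\C$. For the inductive step I would use the classical topological presentation of the James filtration as a pushout: letting $\Delta_n\subset X^n$ be the ``fat wedge'' of tuples having at least one coordinate equal to the basepoint $e$, deleting a basepoint coordinate defines a map $\phi\colon\Delta_n\to J_{n-1}X$, and topologically $J_nX\cong X^n\sqcup_{\Delta_n}J_{n-1}X$. When $X=\Spec A$ is affine, this pushout is realized directly by a fiber product of rings: with $I_n=\bigcap_{i=1}^n A^{\otimes(i-1)}\otimes A_+\otimes A^{\otimes(n-i)}$ (the ideal of the reduced $\Delta_n$, where $A_+=\ker(A\to\C)$ is the maximal ideal at $e$), set $J_nX=\Spec\!\big(A^{\otimes n}\times_{A^{\otimes n}/I_n}B_{n-1}\big)$, where $B_{n-1}$ is the coordinate ring of $J_{n-1}X$. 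This ring embeds in $A^{\otimes n}\times B_{n-1}$, hence is reduced, and it is finitely generated over $\C$ by the standard fact on fiber products of finite-type algebras one of whose legs is surjective; so $J_nX$ is an affine variety.

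For $X$ projective I would realize the pushout via Ferrand's pushout theorem. Here $\Delta_n\hookrightarrow X^n$ is a closed immersion and $\phi$ is finite, being quasi-finite (its fibers are finite sets of the evident cardinalities) and proper, since $\Delta_n$ is closed in the projective variety $X^n$ and $J_{n-1}X$ is separated by induction. Ferrand then produces a separated, reduced, finite-type scheme $J_nX=X^n\sqcup_{\Delta_n}J_{n-1}X$, together with a closed immersion $J_{n-1}X\hookrightarrow J_nX$ (the next filtration step) and a finite surjection $\pi_n\colon X^n\to J_nX$; in particular $J_nX$ is proper, being the image of the proper variety $X^n$. It remains to put an ample line bundle on $J_nX$. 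A line bundle on the pushout amounts to compatible line bundles on $X^n$ and on $J_{n-1}X$, and if both are ample then so is the glued bundle, ampleness being testable after pullback along the finite surjection $X^n\sqcup J_{n-1}X\to J_nX$. I would exhibit such compatible ample bundles via an explicit embedding in the spirit of the paper's double monomial quasisymmetric functions: fixing $X\hookrightarrow\mathbb P(V)$ with $e$ a coordinate point, every function $\sum_{i_1<\dots<i_j}g_1(x_{i_1})\cdots g_j(x_{i_j})$ on $X^n$ with $j\le n$ and each $g_\ell$ a linear form on $V$ vanishing at $e$ is constant on $\sim$-classes (any summand touching a basepoint coordinate vanishes, and the rest is insensitive to the positions of the basepoints). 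After homogenizing, these functions define a morphism $X^n\to\mathbb P^M$ which factors through $\pi_n$, and I claim the induced map on $J_nX$ separates $\sim$-classes and their tangent directions — an auxiliary coordinate built from a single $g$ reads off a word's length, while on words of exact length $k$ the $j=k$ coordinates recover the word through its Segre coordinates — so $J_nX\hookrightarrow\mathbb P^M$ is a closed immersion, witnessing projectivity (and its $\mathcal O(1)$ restricts compatibly on $X^n$ and on $J_{n-1}X$). For $X$ quasi-projective, apply the projective case to a projective closure $\overline X$ and take $J_nX$ to be the open subscheme of $J_n\overline X$ on which every letter of the word avoids $\overline X\setminus X$; this locus is open because its complement is the image of a closed set under the finite map $\pi_n$, and the preimage of $J_nX$ in $\overline X{}^{\,n}$ is exactly $X^n$, so the topology is correct.

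Finally I would check the comparison with James's construction: analytification carries the pushout square to the corresponding pushout of complex-analytic spaces along the closed immersion $\Delta_n^{\mathrm{an}}\hookrightarrow(X^n)^{\mathrm{an}}$ — such pushouts commute with analytification — and that analytic pushout is $J_nX$ by the topological presentation above; the inclusions $J_{n-1}X\hookrightarrow J_nX$ analytify to the James filtration maps, so $JX=\varinjlim_n J_nX$ is an ind-variety. I expect the crux to be the projective case, and within it the verification that the quasisymmetrized coordinates give a genuine \emph{closed} immersion — i.e.\ separation of points and tangent vectors across strata of differing word-length, together with compatibility of the resulting polarization with the Ferrand pushout. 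The affine case, the finiteness of $\phi$, the reducedness, and the analytic comparison should all be comparatively routine.
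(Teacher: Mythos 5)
Your architecture --- inductively realizing $J_nX$ as a Ferrand pushout $X^n\sqcup_{\Delta_n}J_{n-1}X$ and then polarizing it --- is genuinely different from the paper's, which never forms a pushout: the paper defines $J_n\bP^m$ outright as $\Proj R$ for the explicit ring $R$ of multihomogeneous sections that are quasisymmetric on an affine chart, proves by a direct separation-of-points computation (choosing functions $f_1,\dots,f_s$ vanishing at the basepoint and adapted to a given pair $p\not\sim p'$) that the resulting map separates exactly the $\sim$-classes, and then obtains the general projective, affine and quasi-projective cases as images, closed subschemes and open subschemes of that one construction. The difficulty with your route is that it concentrates precisely at the step you leave as a claim. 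Ferrand only yields a proper scheme, and projectivity rests on the assertion that the quasisymmetrized multidegree-$(1,\dots,1)$ sections induce a closed immersion $J_nX\hookrightarrow\bP^M$; that assertion is not proved, and it is strictly harder than what is needed, since a closed immersion requires injectivity on tangent spaces at the pinched, singular points of the pushout --- an infinitesimal statement the paper's $\Proj$ construction never has to confront. Your own descent-of-ampleness remark already gives the cheaper sufficient route: show the quasisymmetric linear system is base-point free on $X^n$, show its restriction to the fat wedge descends (coherently across the components $X^{i-1}\times\{e\}\times X^{n-i}$ and their scheme-theoretic overlaps) to the bundle and morphism already built on $J_{n-1}X$, so that a line bundle glues on the pushout, and then conclude from ampleness of the pullback $\cO(H_1+\dots+H_n)|_{X^n}$ and descent of ampleness along the finite surjection $\pi_n$. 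But that descent datum is exactly the ``quasisymmetric sections agree on equivalent tuples'' computation that is the analytic heart of the paper's proof, and it is absent from your write-up; as it stands the projective case is not established.

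There is also a concrete error in the affine case: the ``standard fact'' you invoke is false as stated. A fiber product of finite-type $\C$-algebras along maps one of which is surjective need not be of finite type; for instance $\C[x,y]\times_{\C[x]}\C\cong\C+y\,\C[x,y]$ (with $y\mapsto 0$ on one leg and the inclusion of constants on the other) is not a finitely generated $\C$-algebra. The correct statement requires the other leg $B_{n-1}\to A^{\otimes n}/I_n$ to be finite, i.e.\ requires $\phi$ to be a finite morphism; this does hold in your setup by induction (on each component of $\Delta_n$, $\phi$ is the previous quotient map, which is finite because the previous pushout was formed along a finite map), but you verify finiteness of $\phi$ only in the projective case and never in the affine one, so the affine argument currently rests on a false lemma. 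Two smaller points of the same kind: the factorization of $X^n\to\bP^M$ through the pushout needs, by the universal property, an actual morphism $J_{n-1}X\to\bP^M$ compatible on $\Delta_n$ (set-theoretic constancy on $\sim$-classes is not enough), and the claim that such pinchings commute with analytification --- which is what guarantees that the closed points really form James's $J_n(X(\C))$ --- should be argued or referenced rather than asserted.
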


We give a more precise version of Theorem \ref{thm:JnPm-intro} as Theorem \ref{thm:JnPm}, where we explicitly give the coordinate ring (respectively, homogeneous coordinate ring) for $J_nX$ when $X$ is affine (respectively, projective).

While we expect Theorem~\ref{thm:JnPm-intro} to be useful more generally, our main application is to the torus-equivariant cohomology of $J(\C \bP^\infty)$. We obtain control of this cohomology ring by exhibiting an explicit paving of $J(\C \bP^\infty)$ by torus-stable affines. It is worth noting that standard tools introduced by Goresky, Kottwitz, and MacPherson \cite{GKM} do not directly apply, as $J(\C \bP^\infty)$ has infinitely-many torus-stable curves.

Our interest in $J(\C \bP^\infty)$ is rooted in the observation of A.~Baker--B.~Richter \cite{Baker.Richter} that the ordinary cohomology of $J(\C \bP^\infty)$ recovers the algebra $\QSym$ of \emph{quasisymmetric functions}. Quasisymmetric functions are an extension of classical symmetric function theory introduced by R.~Stanley \cite{Stanley} in service of enumerative combinatorics. Greatly developed by I.~Gessel \cite{Gessel}, they now occupy a central place in algebraic combinatorics more generally, with connections, for example, to representation theory of $0$-Hecke algebras \cite{Choi.Kim.Nam.Oh, Duchamp.Krob.Leclerc.Thibon, Searles:extended, Tewari.vanWilligenburg}, Macdonald theory \cite{Corteel.Haglund.Mandelshtam.Mason.Williams}, time series \cite{Diehl.EbrahimiFard.Tapia}, and graph polynomials \cite{Shareshian.Wachs}. See \cite{Mason} for a survey of recent developments in quasisymmetric function theory and \cite{Billey.McNamara} for a more historical perspective.

For combinatorial purposes, it is insufficient to have abstract isomorphisms of algebras, such as that provided between $H^\star(J(\C \bP^\infty))$ and $\QSym$ in \cite{Baker.Richter}. It is necessary further to track the images of distinguished bases. In previous work \cite{Pechenik.SatrianoKthy}, we showed that the Baker--Richter isomorphism identifies the canonical cellular basis of $H^\star(J(\C \bP^\infty))$ with the classical \emph{monomial quasisymmetric function} basis of $\QSym$, yielding an analogue of Schubert calculus for $J(\C \bP^\infty)$ and related spaces. 

Traditionally, Schubert calculus studies the cohomology rings of Grassmannians and other flag manifolds through the construction of combinatorial models, such as symmetric function theory and its asymmetric analogues (see \cite{Pechenik.Searles:survey} for a survey of this perspective). Modern Schubert calculus is especially interested in developing analogous combinatorial theories for richer cohomology theories, especially $K$-theory (e.g., \cite{Buch}) and torus-equivariant theories (e.g., \cite{Knutson.Tao:HT,Pechenik.Yong}). Here, for example, \emph{Schur functions} (for Grassmannians) and \emph{Schubert polynomials} (for complete flag manifolds) are replaced in $K$-theory with \emph{Grothendieck polynomials} and in torus-equivariant cohomology with \emph{factorial Schur functions} and \emph{double Schubert polynomials}, respectively.

As initiated in \cite{Lam.Pylyavskyy}, it has long been desired to obtain $K$-theoretic and torus-equivariant analogues in quasisymmetric function theory in analogy with well-studied such analogues within symmetric and asymmetric function theory. In \cite{Pechenik.SatrianoKthy}, we developed a $K$-theoretic analogue of the Baker--Richter isomorphism, together with a ``cellular'' basis of the $K$-theory ring of $J(\C \bP^\infty)$, to obtain the first topologically-motivated $K$-analogues in quasisymmetric function theory.

Here, we pursue the other most-desired analogue, obtaining the first torus-equivariant versions of quasisymmetric functions. Specifically, we introduce double monomial quasisymmetric functions $M_\alpha(\bx,\by)$, which we show correspond to a cellular basis of $H^\star_T(J(\C \bP^\infty))$ that we develop. Here, for $\alpha=(a_1,\dots,a_k)$ a sequence of positive integers, we define the \newword{double monomial quasisymmetric function} as
\begin{equation}\label{eq:double_monomial}
M_\alpha(\bx,\by)=\sum_{1\leq i_1<\dots<i_k}\prod_{\ell=1}^k\prod_{1\leq j_\ell\leq a_\ell}(x_{i_\ell}-y_{j_\ell}).
\end{equation}
 Double monomial quasisymmetric functions recover ordinary monomial quasisymmetric functions by setting the $y$-variables to $0$, reflecting forgetting the $T$-action.

Letting $\Lambda=H^*_T(pt)$, our second main theorem is the following.

\begin{theorem}
\label{james-reduced-cells->monomial-basis}
We have a graded $\Lambda$-module isomorphism
\[
H^*_T(J(\C\bP^\infty); \Z)\simeq\QSym(\Lambda)
\]
identifying the cellular basis element $x_\alpha$ with the double monomial quasisymmetric function $M_\alpha(\bx,\by)$.
\end{theorem}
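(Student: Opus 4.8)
The plan is to work with finite-dimensional torus-stable approximations, build an explicit affine paving, read off the equivariant cell classes, and pass to the limit. First, using Theorem~\ref{thm:JnPm} I realize $J(\C\bP^\infty)$ as the ind-variety $\varinjlim_{n,m} J_n(\C\bP^m)$, equipped with the action of $T$ induced coordinatewise from the standard action on $\C\bP^\infty$ (fixing the basepoint $[1:0:0:\cdots]$). Since equivariant cohomology carries this colimit to $\varprojlim_{n,m} H^*_T(J_n(\C\bP^m);\Z)$, and since in each fixed cohomological degree only finitely many compositions can contribute, it suffices to understand each $H^*_T(J_n(\C\bP^m);\Z)$ compatibly with the transition maps, provided the relevant $\varprojlim^1$ vanishes --- which it does once these modules are free with degreewise-surjective transition maps.

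Second, I exhibit the paving promised in the introduction. For a composition $\alpha=(a_1,\dots,a_k)$ with $k\le n$ and each $a_\ell\le m$, let $X_\alpha\subset J_n(\C\bP^m)$ be the locally closed subset of classes representable by a tuple $(p_1,\dots,p_k)$ with each $p_\ell$ in a fixed Schubert cell of $\C\bP^m$ of dimension $a_\ell$. Each $X_\alpha$ is $T$-stable and isomorphic to $\bA^{|\alpha|}$; as $\alpha$ ranges over all admissible compositions the $X_\alpha$ partition $J_n(\C\bP^m)$; and ordering them by closure exhibits $J_n(\C\bP^m)$ as paved by $T$-stable affines. Because every cell has even real dimension, this forces equivariant formality: $H^*_T(J_n(\C\bP^m);\Z)$ is a free $\Lambda$-module, the cellular filtration produces a canonical $\Lambda$-basis $\{x_\alpha\}$ --- the equivariant class attached to the cell $X_\alpha$ --- specializing modulo the augmentation ideal of $\Lambda$ to the non-equivariant cellular basis of \cite{Pechenik.SatrianoKthy}. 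In particular the transition maps are degreewise surjective, completing the reduction of the first step, and (there being only finitely many $T$-fixed points, themselves indexed by compositions $\beta$) the restriction to fixed points is injective.

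The crux is the third step: identifying $x_\alpha$ with $M_\alpha(\bx,\by)$. The difficulty is precisely that $J(\C\bP^\infty)$ has infinitely many $T$-stable curves, so the Goresky--Kottwitz--MacPherson machinery does not apply and one cannot simply read $x_\alpha$ off from edge data. I expect two interlocking ingredients. On the geometric side, $\overline{X_\alpha}$ is the image of a product of projective subspaces $\bP^{a_1}\times\cdots\times\bP^{a_k}$ under the James multiplication $\mu\colon(\C\bP^m)^{\times k}\to J_k(\C\bP^m)\subset J_n(\C\bP^m)$, and $\mu$ restricts to an isomorphism onto the open cell; computing the equivariant pushforward $\mu_*$ --- feeding in that the equivariant class of a coordinate $\bP^{a}\subset\C\bP^m$ is a product of linear forms $\xi-y_j$ in the equivariant hyperplane class, applying the projection formula, and tracking how the basepoint-insertion relations defining $J_k$ reorganize the tuples (the quasi-shuffle combinatorics underlying $J$) --- should collapse to exactly the sum over $1\le i_1<\dots<i_k$ in~\eqref{eq:double_monomial}. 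On the combinatorial side, one verifies that the fixed-point restrictions $\{x_\alpha\vert_{p_\beta}\}$ and $\{M_\alpha(\bx,\by)\vert_{p_\beta}\}$ satisfy the same triangular system with respect to the closure order (vanishing unless $p_\beta\in\overline{X_\alpha}$, with matching diagonal Euler-class values); by injectivity of fixed-point restriction this pins down the identification, and reduction modulo the $y$-variables recovers the base case of \cite{Pechenik.SatrianoKthy}. Either route can also be run directly off the explicit homogeneous coordinate ring of $J_n(\C\bP^m)$ supplied by Theorem~\ref{thm:JnPm}.

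Finally, I assemble the limit. Since $\{M_\alpha(\bx,\by)\}_\alpha$ is obtained from $\{M_\alpha(\bx)\otimes 1\}_\alpha$ by a $\Lambda$-unitriangular change of basis (expand each factorial power $\prod_{j}(x_i-y_j)$), it is a $\Lambda$-basis of $\QSym(\Lambda)$; hence the degreewise-stable inverse limit of the second step is the free $\Lambda$-module on $\{x_\alpha\}$, which the third step identifies with $\QSym(\Lambda)$ carrying $x_\alpha$ to $M_\alpha(\bx,\by)$. Checking that the identifications at finite levels are compatible with the maps $J_n(\C\bP^m)\hookrightarrow J_{n'}(\C\bP^{m'})$ is then routine. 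The entire argument thus hinges on the affine paving of the second step and the class computation of the third; the homological algebra of the inverse limit is formal.
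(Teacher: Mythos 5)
Your outer scaffolding (finite approximations $J_n\C\bP^m$, the $T$-stable affine paving and the resulting free $\Lambda$-basis, the degreewise-stabilizing limit, and the observation that the $M_\alpha(\bx,\by)$ form a $\Lambda$-basis of $\QSym(\Lambda)$) matches the paper's Proposition~\ref{prop:base-change-HT} and Corollary~\ref{cor:limit-JCPinfty}. But the crux --- proving that the cellular class $x_\alpha$ \emph{is} $M_\alpha(\bx,\by)$ --- is exactly the part you leave as ``should collapse'' / ``one verifies,'' and both routes you sketch have genuine gaps. Route (a) asks for an equivariant Gysin pushforward $\mu_*$ of the class of $\bP^{a_1}\times\cdots\times\bP^{a_k}$ into $H^*_T(J_n\C\bP^m)$; but $J_n\C\bP^m$ is singular, so there is no Poincar\'e duality and no pushforward to cohomology --- the cellular classes here are defined as the $\Lambda$-dual basis to the Borel--Moore classes of cell closures, not as pushed-forward fundamental classes, so the projection-formula computation you envision does not literally make sense as stated. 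Moreover, even granting some version of it, you never construct the ring map that realizes $H^*_T(J_n\C\bP^m)$ inside a polynomial/power-series ring in the $x_i$, without which ``$x_\alpha$ corresponds to $M_\alpha(\bx,\by)$'' is not yet a well-posed equation. The paper's mechanism is the opposite direction and is the missing ingredient: pull back along the quotient $q_{n,m}\colon(\C\bP^m)^n\to J_n\C\bP^m$, prove $q_{n,m}^*$ is injective (via a splitting of the induced map on fixed-point sets), and then use that $q_{n,m}^{-1}(e_\alpha)=\coprod_{\iota:\iota^+=\alpha}e_\iota$ together with the known classes $[e_{2a_1}\times\cdots\times e_{2a_n}]_{T_{m+1}}=\prod_i\prod_{j\le a_i}(x_i-y_j)$ to get $q_{n,m}^*(x_\alpha)=\sum_\iota[e_\iota]$, which is precisely the truncation of \eqref{eq:double_monomial}.

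Route (b) has a logical gap of a different kind: injectivity of restriction to fixed points lets you conclude two classes are equal only if \emph{all} their fixed-point restrictions agree; knowing that both systems are ``supported'' on $\overline{X_\alpha}$ with matching diagonal Euler classes does not determine the class (two classes with the same support and diagonal can differ by something supported on the boundary). The Knutson--Tao-style uniqueness statement you are implicitly invoking needs degree constraints plus GKM-type control of the one-skeleton, which is exactly what the authors point out is unavailable here (infinitely many $T$-stable curves); and computing the full restriction $x_\alpha|_{p_\beta}$ at every fixed point is essentially the same work as the pullback computation above, so it cannot be waved through. In short: steps (1), (2), (4) are fine and agree with the paper, but step (3) needs the concrete argument via $q_{n,m}^*$ (injectivity plus the cell-preimage computation), and as written your proposal does not supply a workable substitute.
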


Lastly, we further the analogy with Schubert calculus in Theorem \ref{thm:product-formula}, giving a cancellation-free combinatorial rule  for multiplying double monomial quasisymmetric functions $M_\alpha(\bx,\by)$ (analogous to the classical Littlewood--Richardson rule for Schur functions). The rule is defined in terms of an \emph{overlapping shuffle product} (cf.\ \cite{Hazewinkel}) and relies on work in equivariant Schubert calculus by H.~Thomas--A.~Yong \cite{Thomas.Yong18}.

{\bf This paper is organized as follows.} In Section~\ref{sec:variety}, we prove a more precise version of Theorem~\ref{thm:JnPm-intro}, imbuing algebraic variety structure on James reduced products. In Section~\ref{sec:cohomology}, we first give a general result (Proposition~\ref{prop:base-change-HT}) about torus-equivariant cohomology of projective varieties with affine pavings. Then we narrow our focus to $X = \C\bP^\infty$ and establish our cellular basis result, Theorem~\ref{james-reduced-cells->monomial-basis}. Finally, Section~\ref{sec:LR} gives a Littlewood--Richardson rule for the equivariant cohomology of $J(\C\bP^\infty)$, or equivalently for the multiplication of double monomial quasisymmetric functions, together with some examples of its use.

\section*{Acknowledgments}
This project was inspired by a question of Frank Sottile and benefitted greatly from conversations with Dan Edidin, Changho Han, Nathan Ilten, David McKinnon, and Doug Park. We are indebted to Dave Anderson, Megumi Harada, and, especially, Matthias Franz many enlightening discussions on equivariant cohomology. We wholeheartedly thank Dave Anderson for detailed comments on an earlier version of our paper which shortened some of our arguments.

\section{Variety structure on the James reduced product}\label{sec:variety}

In order to apply standard tools in equivariant cohomology, we prove that the James reduced product carries the structure of an algebraic variety. Throughout this section, we use the following notation:~if $A$ is a ring and $f\in A$, then 
\begin{equation}\label{eqn:f-super-k}
f^{(k)} \coloneqq 1\otimes\dots\otimes1\otimes f\otimes1\dots\otimes 1\in A^{\otimes n}
\end{equation}
with $f$ in the $k$-th tensor factor.

We introduce the following ring.

\begin{definition}\label{def:qsym-ring-at-m}
Let $A$ be a ring and $\mfm\subset A$ be a maximal ideal. Then the \newword{$n$-truncated quasisymmetric functions with respect to} $\mfm$ are defined as the subring
\[
\QSym_{n,\mfm}(A)\subset A^{\otimes n}
\]
generated by all expressions of the form
\[
\sum_{1\leq k_1<\dots<k_s\leq n}f_1^{(k_1)}\dots f_s^{(k_s)}
\]
where $0\leq s\leq n$ and $f_1,\dots,f_s\in\mfm$.
\end{definition}

\begin{example}\label{qsym-ring-at-m:A1}
Our definition above recovers the usual quasisymmetric functions (cf.\ \cite{Mason, Pechenik.Searles:survey}) in $n$ variables with coefficients in $R$ when $A=R[x]$ and $\mfm=(x)$:
\[
\QSym_n(R)=\QSym_{n,(x)}(R[x]).\qedhere
\]
\end{example}

The following is our precise version of Theorem~\ref{thm:JnPm-intro}.

\begin{theorem}\label{thm:JnPm}
If $X$ is a complex quasi-projective variety, then $J_n(X(\C))$ also carries the structure of a quasi-projective variety in such a way that
\begin{enumerate}
\item\label{C-pts} $J_n(X(\C))=(J_nX)(\C)$,
\item\label{map-of-vars} the quotient map $q_n\colon X^n\to J_nX$ is a map of varieties, and 
\item\label{normalization} $q_n$ is the normalization map. 
\end{enumerate}
Furthermore,
\begin{enumerate}[(a)]
\item\label{affineJnaffine} if $X=\Spec A$ is affine with base point corresponding to the maximal ideal $\mfm$, then
\[
J_nX=\Spec(\QSym_{n,\mfm}(A))
\]
is also affine;
\item\label{projJnproj} if $X$ is projective then $J_nX$ is also projective;

\item\label{projJnprojcoordinatering} more specifically, if $X$ is projective then choose a very ample divisor $H\subset X$ that does not contain the base point. Let $H_i$ be the pullback of $H$ under the projection $X^n\to X$ onto the $i$-th factor, let $V=X\setminus H$, and let $\mfm\subset\C[V]$ be the maximal ideal corresponding to the base point. Define
\[
R \coloneqq
\left \{
F\in \bigoplus_{d_i\geq0}H^0(\cO(d_1H_1+\dots+d_nH_n))\textrm{\ such\ that\ }F|_V\in\QSym_{n,\mfm}\C[V]\subset\C[V]^{\otimes n}
\right \}.
\]
Then we have
\[
J_nX=\Proj R.
\]
\end{enumerate}
\end{theorem}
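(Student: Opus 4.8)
The backbone is the affine case, part~\ref{affineJnaffine}; from it I would derive the projective case \ref{projJnproj}--\ref{projJnprojcoordinatering} by a homogenization argument realizing $J_nX$ as $\Proj R$ and covering it by affine charts of the form $\Spec\QSym_{n,\mfm'}(\C[V'])$, and then the general quasi-projective case by embedding $X$ as a dense open in a projective variety. Properties \ref{C-pts}--\ref{normalization} get checked on affine charts and glued.

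\textbf{Affine case.} Set $J_nX:=\Spec\QSym_{n,\mfm}(A)$, with $q_n$ dual to the inclusion $\QSym_{n,\mfm}(A)\hookrightarrow A^{\otimes n}$. The first step is to show $\QSym_{n,\mfm}(A)$ is a finitely generated $\C$-algebra and $q_n$ is finite surjective: for any $a\in A$ one has $e_s(a^{(1)},\dots,a^{(n)})\in\QSym_{n,\mfm}(A)$ (write $a=a(e)+(a-a(e))$ with $a-a(e)\in\mfm$ and expand), so each $a^{(i)}$ is a root of $\prod_{j=1}^n(t-a^{(j)})\in\QSym_{n,\mfm}(A)[t]$; hence $A^{\otimes n}$ is integral, and (being a finitely generated $\C$-algebra) module-finite, over $\QSym_{n,\mfm}(A)$. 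By Artin--Tate the latter is a finitely generated $\C$-algebra, and it is a domain since $X^n$ is a variety over the algebraically closed field $\C$; thus $J_nX$ is an affine variety and $q_n$ is finite surjective. The decisive second step is that on $\C$-points the fibers of $q_n$ are exactly the James classes: one containment is clear since basepoint coordinates kill every generator, and for the reverse it suffices that a word $w=(w_1,\dots,w_p)$ in a finite set $\{v_1,\dots,v_r\}\subseteq X(\C)\setminus\{e\}$ is recovered from the integers $\#\{i_1<\dots<i_s:w_{i_j}=v_{a_j}\}$ over all sequences $(a_1,\dots,a_s)$ --- immediate on taking $s=p$ --- and these integers are read off from the generators by taking the $f_j\in\mfm$ to be ``indicator'' functions $f_j(v_a)=\delta_{a,a_j}$, available since $\mfm$ surjects onto $\prod_a\C[X]/\mfm_{v_a}$ by the Chinese Remainder Theorem at the distinct points $e,v_1,\dots,v_r$. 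This yields \ref{C-pts}; \ref{map-of-vars} is built into $q_n$; and for \ref{normalization}, a generic point of $X^n$ lies in $(X\setminus\{e\})^n$, where its James class is a singleton, so $q_n$ is generically injective on $\C$-points, hence birational (characteristic $0$, $q_n$ finite dominant), hence --- $X^n$ being normal --- the normalization of $J_nX$.

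\textbf{Projective case.} With $H,V,H_i,\mfm$ as in \ref{projJnprojcoordinatering}, let $\sigma$ cut out $H$, put $S:=\bigoplus_{d\geq0}H^0(X,\cO(dH))$ (so $\Proj S=X$, $\C[V]=S[\sigma^{-1}]_0$), let $\mathcal S:=S^{\otimes n}$ (multigraded, with $\mathcal S[\tau^{-1}]_{(0,\dots,0)}=\C[V]^{\otimes n}$ for $\tau:=\sigma^{(1)}\cdots\sigma^{(n)}$), and let $\rho\colon\mathcal S\to\C[V]^{\otimes n}$ be restriction to $V^n$; then $R=\rho^{-1}(\QSym_{n,\mfm}\C[V])$. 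I would first check $R$ has no common zero off the irrelevant ideal, so that $R\hookrightarrow\mathcal S$ defines an honest morphism $q_n\colon X^n\to\Proj R$: given $p=(p_1,\dots,p_n)$, pick a hyperplane section $H'\in|H|$ avoiding $e,p_1,\dots,p_n$, with section $\sigma'$; then $\sigma'/\sigma$ is a unit of $\C[V]$ at $e$, so $\rho((\sigma')^{(1)}\cdots(\sigma')^{(n)})=\prod_i(c'+h_0^{(i)})=\sum_s (c')^{n-s}e_s(h_0^{(1)},\dots,h_0^{(n)})\in\QSym_{n,\mfm}\C[V]$, i.e.\ $\tau':=(\sigma')^{(1)}\cdots(\sigma')^{(n)}\in R$, with $\tau'(p)\neq0$. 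Since $X^n$ is projective and $\Proj R$ is separated, $q_n$ is proper; being also dominant it is surjective, so the charts $D_+(\tau')$ cover $\Proj R$ (they pull back to a cover of $X^n$). The crux is to identify $D_+(\tau')$ with $\Spec\QSym_{n,\mfm'}(\C[V'])=J_nV'$ for $V':=X\setminus H'$, which reduces to the following \emph{localization stability} of $\QSym$: for a ring $A$, maximal $\mfm$, and $f\in A\setminus\mfm$ (with $\mfm':=\mfm A[f^{-1}]$),
\[
\QSym_{n,\mfm}(A)\bigl[(f^{(1)}\cdots f^{(n)})^{-1}\bigr]=\QSym_{n,\mfm'}(A[f^{-1}])\subseteq A[f^{-1}]^{\otimes n}.
\]
Here $f^{(1)}\cdots f^{(n)}\in\QSym_{n,\mfm}(A)$ (same expansion), and it is a unit of $\QSym_{n,\mfm'}(A[f^{-1}])$ since $A[f^{-1}]^{\otimes n}$ is integral over it, giving ``$\subseteq$''; for ``$\supseteq$'' one clears denominators in a generator $\sum_{k_1<\dots<k_s}\bar f_1^{(k_1)}\cdots\bar f_s^{(k_s)}$ of $\QSym_{n,\mfm'}(A[f^{-1}])$, expands each $(f^{(l)})^N$ binomially via $f^{(l)}=f(e)+f_0^{(l)}$, and regroups by which positions carry a factor of $f_0$, exhibiting it --- via shuffles of $(f_1,\dots,f_s)$ with copies of powers of $f_0$ --- as a $\C$-linear combination of generators of $\QSym_{n,\mfm}(A)$ over $(f^{(1)}\cdots f^{(n)})^N$. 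Granting this, $q_n$ is finite (finite over each $D_+(\tau')$ by the affine case), $(\Proj R)(\C)=\bigcup_{H'}J_n(V'(\C))=J_n(X(\C))$ (any finite set of points of $X$ avoids some $H'$), and $q_n$ is the normalization (local); moreover $\Proj R$ is a projective variety, giving \ref{projJnproj} and \ref{projJnprojcoordinatering} with properties \ref{C-pts}--\ref{normalization} as verified. (One should keep in mind that $R$ is $\Z^n$-graded and $\Proj R$ is to be read via the balanced-degree charts $D_+((\sigma')^{(1)}\cdots(\sigma')^{(n)})$; equivalently one may descend projectivity of $J_nX$ along the finite surjection $q_n$ from the projective $X^n$.)

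\textbf{General case, and the main difficulty.} For $X$ quasi-projective with basepoint $e$, embed $X$ as a dense open of a projective variety $\overline X$ (choosing $\overline X$ normal when $X$ is). Since $q_n^{\overline X}$ is finite, the locus of words in $J_n\overline X$ having a letter in $\overline X\setminus X$ is closed, so its complement $J_nX$ is an open subvariety of $J_n\overline X$; it is quasi-projective and inherits \ref{C-pts}--\ref{normalization} by restriction (note $(q_n^{\overline X})^{-1}(J_nX)=X^n$). Independence of $\overline X$, and compatibility with \ref{affineJnaffine}, follow from a uniqueness principle: any variety with a finite birational morphism from $X^n$ inducing the James quotient on $\C$-points is recovered as $\underline{\Spec}$ of the subsheaf of $(q_n)_*\cO_{X^n}$ of functions constant along James fibers. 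I expect the localization-stability lemma --- specifically the ``$\supseteq$'' inclusion, whose content is the shuffle bookkeeping above --- to be the main obstacle; everything else is assembling standard facts about finite morphisms, $\Proj$, and normalization, with the one remaining subtlety being that the charts $D_+(\tau')$ (for a fixed $H$, varying $H'$) genuinely cover $\Proj R$, which is why I route that through properness and dominance of $q_n$ rather than a direct computation inside $R$.
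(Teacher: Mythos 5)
Your affine argument is genuinely different from the paper's route and is essentially sound: the paper never treats the affine case directly, but rather reduces everything to $X=\bP^m$, \emph{defines} $J_n\bP^m=\Proj R$, proves by an explicit separation argument that elements of $R$ take equal values at two $\C$-points of $(\bP^m)^n$ exactly when the points are James-equivalent, and then deduces the quasi-projective and affine statements from open/closed restrictions and cartesian diagrams; your integrality-plus-Artin--Tate finiteness argument and the CRT ``indicator function'' separation of James classes give the affine case (a) directly and in full generality, and your localization-stability lemma is true --- the shuffle/regrouping computation you sketch for its ``$\supseteq$'' half does work. (Both you and the paper are implicitly using normality of $X$ for claim (3); that is not a point of difference.)

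The genuine gap is in the projective assembly, and it is not where you predicted. The identification $D_+(\tau')\simeq\Spec\QSym_{n,\mfm_{V'}}(\C[V'])$ does \emph{not} reduce to the localization lemma. Your computation gives one inclusion (every generator of $\QSym_{n,\mfm_{V'}}(\C[V'])$ is of the form $F/\tau'^{d}$ with $F\in R$), but for the reverse inclusion, given $F\in R$ you only know that $u:=F/\tau'^{k}|_{(V')^n}$ is regular on $(V')^n$ and that its restriction to $(V\cap V')^n$ lies in $\QSym_{n,\mfm_{V'}}(\C[V'])\bigl[(g^{(1)}\cdots g^{(n)})^{-1}\bigr]$ with $g=\sigma/\sigma'$; you still need the saturation statement $\C[V']^{\otimes n}\cap\QSym_{n,\mfm_{V'}}(\C[V'])\bigl[(g^{(1)}\cdots g^{(n)})^{-1}\bigr]=\QSym_{n,\mfm_{V'}}(\C[V'])$, which the lemma does not supply and which is not obvious from the generators-only definition (these subrings are not integrally closed in $\C[V']^{\otimes n}$ --- that is the whole point of (3)). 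The same scheme-structure issue undermines your ``uniqueness principle,'' which is false as stated: the cuspidal cubic $\bA^1\to\Spec\C[t^2,t^3]$ is finite, birational and bijective on $\C$-points, yet the fibrewise-constant functions are all of $\C[t]$; so knowing a candidate structure is finite, birational and has the right $\C$-points does not pin down its structure sheaf, and this is exactly what you need for compatibility of the quasi-projective restriction with (a). Both defects are repairable by one extra lemma, e.g.\ the equalizer characterization $\QSym_{n,\mfm}(A)=\{u\in A^{\otimes n}:\epsilon_1(u)=\cdots=\epsilon_n(u)\}$, where $\epsilon_i$ evaluates the $i$-th tensor factor at the basepoint (proved by decomposing $A=\C\oplus\mfm$ and showing the components of such a $u$ depend only on the size of their support, hence are spanned by the defining generators); with it, the saturation and the uniqueness step are immediate and your architecture goes through. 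By contrast, the paper avoids ever having to match two independently built structures, which is why its point-separation computation for $\bP^m$ suffices; and the step you flagged as the main obstacle, the ``$\supseteq$'' half of the localization lemma, is in fact the unproblematic part.
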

\begin{proof}
First, if $X$ and $J_nX$ are projective varieties where (\ref{C-pts}) and (\ref{map-of-vars}) hold, then (\ref{normalization}) also holds. Indeed, this follows from the proof of \cite[Proposition 5.1]{Pechenik.SatrianoKthy}. (The proposition is written there only for the case $X=\bP^m$; however the proof works equally well for any proper variety.)

Next, let us momentarily assume the existence of the projective variety $J_nX$ satisfying properties (\ref{C-pts}) and (\ref{map-of-vars}) (hence also (\ref{normalization})) whenever $X$ is projective. We will deduce the existence of $J_nV$ satisfying properties (\ref{C-pts})--(\ref{normalization}) for every quasi-projective scheme $V$. Since $V$ is quasi-projective, it is an open subscheme of a projective variety $X$. The base point of $V$ endows $X$ with a choice of basepoint. Let $\sim$ denote the equivalence relation defining the James reduced product as a quotient. Let $Z=X\setminus V$ and $Z_i=X\times\dots\times X\times  Z\times X\times\dots \times X$ where $Z$ is in the $i$-th factor. Note that if $p=(p_1,\dots,p_n)\in V^n$ and $p'=(p'_1,\dots,p'_n)\in X^n$ and $p\sim p'$, then each $p'_i$ is either equal to some $p_j$ or else is a base point; as a result, $p'\in V^n$. This shows that under the quotient map $q_n\colon X^n\to J_nX$, we have
\[
J_nV \coloneqq V^n\!/{\sim} = q_n(V^n)=J_nX\setminus\bigcup_i q_n(Z_i).
\]
Since by assumption, $X$ and $J_nX$ are projective, we see $q_n$ is proper. As a result, $q_n(Z_i)$ is closed and so $q_n(V^n)\subset J_nX$ is open, hence a quasi-projective variety. Properties (\ref{C-pts}) and (\ref{map-of-vars}) for $J_nV$ follow from that of $J_nX$. Furthermore, we showed above that if $p\in V^n$, $p'\in X$, and $p\sim p'$, then $p'\in V^n$. In other words, the diagram
\begin{equation}\label{cartesiandiagramqproj}
\xymatrix{
V^n\ar@{^{(}->}[r]\ar[d]_-{q_{n,V}} & X^n\ar[d]^-{q_{n,X}}\\
J_nV\ar@{^{(}->}[r] & J_nX
}
\end{equation}
is cartesian. Since $q_{n,X}$ is the normalization map and $J_nV\subset J_nX$ is open, we see $q_{n,V}$ is also the normalization map, so property (\ref{normalization}) holds. In the case where $V$ is affine, $V^n$ is also affine, and so $J_nV$ is affine by \cite[Tag 01YQ]{stacks-project}.

Let us now show that (\ref{projJnprojcoordinatering}) for $\bP^m$ implies (\ref{affineJnaffine}). We have already deduced that $J_nV$ is affine if $V$ is affine, so we need only compute the coordinate ring $\C[J_nV]$. First, it is immediate from the description of the coordinate ring in (\ref{projJnprojcoordinatering}) that if $\bP^m$ has projective coordinates $x_0,\dots,x_m$ and $H$ is the hyperplane defined by the vanishing of $x_m$, then (\ref{affineJnaffine}) holds for $\bA^m=\bP^m\setminus H$. Next, every affine $V$ may be embedded as a closed subscheme of $\bA^m$. We again have that if $p\in V^n$ and $p'\in(\bA^m)^n$ with $p\sim p'$, then $p'\in V^n$. In other words, 
\begin{equation}\label{cartesiandiagramaffine}
\xymatrix{
V^n\ar@{^{(}->}[r]\ar[d]_-{q_{n,V}} & (\bA^m)^n\ar[d]^-{q_{n,\bA^m}}\\
J_nV\ar@{^{(}->}[r]^-{\iota} & J_n\bA^m
}
\end{equation}
is cartesian. Since $q_{n,\bA^m}$ is the normalization map, it is closed and so $J_nV=q_{n,\bA^m}(V^n)\subset J_n\bA^m$ is closed. Since \eqref{cartesiandiagramaffine} is cartesian, we see $\C[J_nV]$ is the image of $\C[J_n\bA^m]$ under the quotient map $\C[\bA^m]^{\otimes n}\to \C[V]^{\otimes n}$. Letting $\mfm_V\subset \C[V]$ be the maximal ideal corresponding to the base point of $V$, and letting $\mfm\subset \C[\bA^m]$ be the unique maximal ideal lying above $\mfm_V$, we see $\C[J_nV]$ is the image of $\QSym_{n,\mfm}\C[\bA^m]$, namely $\QSym_{n,\mfm_V}\C[V]$.

Now let $X$ be a projective variety. To prove $J_nX$ is a projective variety, it suffices to handle the case where $X=\bP^m$. To see this, consider a closed embedding $X\subset\bP^m$; the image of the base point in $X$ endows $\bP^m$ with a choice of base point. Then we have an embedding $X^n\subset(\bP^m)^n$ and $J_nX$ is the image of $X^n$ under the quotient map $q_n\colon(\bP^m)^n\to J_n\bP^m$. Since $(\bP^m)^n$ and $J_n\bP^m$ are projective, $q_n$ is proper, hence closed. Thus, $J_nX\subset J_n\bP^m$ is projective. By construction, (\ref{C-pts}), (\ref{map-of-vars}), and (\ref{projJnprojcoordinatering}) hold for $X$ provided they hold for $\bP^m$.

We have therefore reduced the proof to showing that (\ref{C-pts}), (\ref{map-of-vars}), and (\ref{projJnprojcoordinatering}) hold when $X=\bP^m$. After a change of coordinates, we may assume our base point is $b \coloneqq (0:\ldots:0:1)$. Let $U \coloneqq \bA^m\subset\bP^m$ denote the affine patch where $x_m\neq0$. Let $H_i$ be the divisor cut out by the vanishing of $x_i$ and let 
\[
R \coloneqq \left \{F\in \bigoplus_{d_i\geq0} H^0(\cO(d_1H_1+\dots+d_nH_n))\textrm{\ such\ that\ } F|_U\in\QSym_{n,(x_i)}\C[\bA^m] \right \}.
\]
We show $J_n\bP^m=\Proj R$. Fix $p=(p_1,\dots,p_n)$ and $p'=(p'_1,\dots, p'_n)$ in $\C\bP^m=\bP^m(\C)$. We must prove $p\sim p'$ if and only if $F(p)=F(p')$ for all $F\in R$.

We first show that if $p\sim p'$ and $F\in R$, then $F(p)=F(p')$. For each sequence $1\leq i_1<\dots<i_r\leq n$, consider the map
\[
\varphi_{i_1,\dots,i_r}\colon(\bP^m\setminus\{b\})^r\to (\bP^m)^n
\]
sending $(t_1,\dots,t_r)$ to the sequence $(b,\dots,b,t_1,b,\dots,b,t_2,b\dots,b,t_r,b,\dots,b)$ where $t_\ell$ is the $i_\ell$-th term in the sequence. Since $p\sim p'$, there exists $r$, sequences $1\leq i_1<\dots<i_r\leq n$ and $1\leq j_1<\dots<j_r\leq n$, and $t\in (\bP^m\setminus\{b\})^r$ such that $p=\varphi_{i_1,\dots,i_r}(t)$ and $p'=\varphi_{j_1,\dots,j_r}(t)$. Thus, we must prove that the two compositions
\[
\xymatrix{
(\bP^m\setminus\{b\})^r \ar@<-.75ex>[rr]_-{\varphi_{j_1,\dots,j_r}} \ar@<.75ex>[rr]^-{\varphi_{i_1,\dots,i_r}} &&   (\bP^m)^n\ar[r]^-{F}& \C
}
\]
are equal. To do so, it suffices to check this over the open subset $(\bA^m\setminus\{b\})^r$. In other words, we have reduced to showing $F(p)=F(p')$ whenever $p\sim p'$ and $p,p'\in(\bA^m)^n$. This holds since $F|_{\bA^m}$ is a quasisymmetric function.

Now suppose $p\not\sim p'$. Let the sequence $p$ contain exactly $n-r$ copies of the base point $b$, where 
\[
1\leq i_1<\dots<i_r\leq n
\]
 and the $p_{i_\ell}$ are non-base points. Let the sequence $p'$ contain exactly $n-s$ copies of the base point, where $1\leq j_1<\dots<j_s\leq n$ and the $p'_{j_\ell}$ are non-base points. Recall the notation from \eqref{eqn:f-super-k}, for any $f\in\C[\bP^m]$, we have $f^{(k)}\in \C[\bP^m]^{\otimes n}=\C[(\bP^m)^n]$.

First suppose $r\neq s$. Without loss of generality, assume $r<s$. Let $f_1,\dots,f_s\in\C[\bP^m]$ such that $f_\ell(p'_{j_\ell})\neq0$ and $f_\ell(b)=0$. Then
\[
F \coloneqq \sum_{k_1<\dots<k_s}f_1^{(k_1)}\dots f_s^{(k_s)}
\]
is in $\QSym_{n,(x_i)}\C[\bA^m]$ when restricted to $U$ and 
\[
F(p')=\prod_{\ell=1}^s f_\ell(p'_{j_\ell})\neq0\quad\textrm{and}\quad F(p)=0.
\]

So we may assume $r=s$. Since $p\not\sim p'$, there exists an index $k$ such that $p_{i_k}\neq p'_{i_k}$. Choose $f_1,\dots,f_s\in\C[\bP^m]$ such that $f_\ell(b)=0$ for all $\ell$; $f_k(p_{i_k})=0$; $f_k(p'_{i_k})\neq 0$; and $f_\ell(p_{i_\ell}),f_\ell(p'_{i_\ell})\neq 0$ for all $\ell\neq k$. Letting $F$ be as above, we again have $F(q)=\prod_{\ell=1}^s f_\ell(q_{j_\ell})\neq0$ and $F(p)=0$.
%
%
%
%
\end{proof}

The following is a simple, yet illustrative, example.

\begin{example}
Applying Theorem \ref{thm:JnPm}(\ref{affineJnaffine}) and Example \ref{qsym-ring-at-m:A1}, we see $J_2\bA^1=\Spec \QSym_2$, where $\QSym_2$ is the ring of quasisymmetric functions in two variables. One checks that $\QSym_2=\C[x_1x_2,x_1+x_2,x_1^2x_2]\subset \C[x_1,x_2]$. There is one relation among these three generators of $\QSym_2$, so letting $t=x_1x_2$, $z=x_1+x_2$, and $w=x_1^2x_2$, we have
\[
J_2\bA^1=\Spec \C[t,z,w]/(z^3-tzw+w^2).
\]
We may therefore view $J_2(\bA^1)$ as a one-parameter family of nodal cubics degenerating to a cusp where $t=0$. Geometrically, this family of cubics arises by looking at the lines $x_1+x_2=t$ in $\bA^2$; under the quotient map $q_2: \bA^2\to J_2\bA^1$, the points $(t,0)$ and $(0,t)$ are identified, leading to a nodal cubic.

Homogenizing with respect to $t$, one obtains
\[
J_2\bP^1=\Proj \C[z,w][s,t]/(z^3-tzw+sw^2)
\]
where $\Proj$ is taken with respect to $s$ and $t$. Then at infinity, where $(s:t)=(1:0)$, we see our nodal cubic degenerates to a reducible curve $z(z^2-w)$.
\end{example}

\section{Double quasisymmetric functions and equivariant cohomology of James reduced products}\label{sec:cohomology}

We use the following notation and terminology throughout the rest of the paper. A \newword{weak composition} is a finite sequence of nonnegative integers and a \newword{composition} is a finite sequence of positive integers. If $\alpha$ is a weak composition, its \newword{positive part} $\alpha^+$ is the composition obtained from $\alpha$ by deleting all $0$ terms, e.g., if $\alpha=(1,7,0,0,5,0,5)$, then $\alpha^+=(1,7,5,5)$. Given positive integers $n < n'$, we frequently write $[n] = \{ 1, 2, \dots, n\}$ and $[n,n'] = \{n, n+1, \dots, n'\}$. Let $\cC$ denote the set of all compositions, and let $\cC_{n,m}$ denote the set of compositions of length at most $n$ with entries at most $m$.

Let $e_0,e_2,e_4,\dots$ be the cells of $\C\bP^\infty$ with $\dim_\C e_{2i} = i$. Then $\C\bP^n=\bigcup_{i\leq n}e_{2i}$ and is equipped with an action of the algebraic torus $T_{n+1} \coloneqq (\C^*)^{n+1}$ given by $(\lambda_1,\dots,\lambda_{n+1})\cdot(x_0:\ldots:x_n)=(\lambda_1x_0:\ldots:\lambda_{n+1}x_n)$. We have sequences of inclusions $\C\bP^0\subset\C\bP^1\subset\cdots$ and $T_0\subset T_1\subset\cdots$ compatible with the actions of $T_{n+1}$ on $\C\bP^n$ for $n\geq0$. Let $T=\bigcup_{n\geq0} T_n=(\C^*)^\infty$. Notice that each cell $e_{2i}$ is $T$-invariant and that the $T$-action on $\C\bP^n$ is induced by the surjection $T\twoheadrightarrow T_{n+1}$ and the $T_{n+1}$-action on $\C\bP^n$.

By definition, the CW complex structure on $J_n\C\bP^\infty$ is induced from the cellular quotient map
\[
q_n\colon (\C\bP^\infty)^n\to J_n\C\bP^\infty.
\]
By construction, $q_n$ identifies the cells $e_{2b_1}\times\dots\times e_{2b_n}$ and $e_{2c_1}\times\dots\times e_{2c_n}$ of $(\C\bP^\infty)^n$ if and only if $(b_1,\dots,b_n)^+ = (c_1,\dots,c_n)^+$. As a result, the cells of $J_n\C\bP^\infty$ are indexed by compositions $\alpha=(\alpha_1,\dots,\alpha_k)$ with length $k\leq n$ and the cells of $J\C\bP^\infty$ are indexed by the set $\cC$ of all compositions. For any composition $\alpha$, we let
\[
e_\alpha\subset J\C\bP^\infty
\]
denote the corresponding cell. Furthermore, the diagonal $T$-action on $(\C\bP^\infty)^n$ descends to a $T$-action on $J_n\C\bP^\infty$ so that each $e_\alpha$ is a $T$-invariant cell that contains a unique $T$-fixed point.

We next prove a general result on torus-equivariant cohomology of projective varieties with affine pavings.

\begin{proposition}\label{prop:base-change-HT}
Let $Y$ be a complex projective variety with an action of a torus $\TT$. Suppose $Y$ has an affine paving by $\TT$-invariant cells $U_1,\dots,U_N\subset Y$ and that each $U_i\simeq\bA^{n_i}$ has a unique $\TT$-fixed point $c_i$. Then 
\begin{enumerate}
\item\label{general-prop-injection} we have an injection
\[
H^*_{\TT}(Y; \Z)\hookrightarrow H^*_{\TT}(Y^{\TT}; \Z)\simeq \bigoplus_{i=1}^N H^*_\TT(pt; \Z),
\]

\item\label{general-prop-basis} the equivariant classes $[U_i]_\TT$ form a basis for $H^*_\TT(Y;\Z)$ as an $H^*_\TT(pt)$-module, and

\item\label{general-prop-base-change} If $\TT'\to\TT$ is a morphism of tori, then
\[
H^*_{\TT'}(Y; \Z)=H^*_{\TT'}(pt; \Z)\otimes_{H^*_{\TT}(pt; \Z)} H^*_{\TT}(Y; \Z).
\]
\end{enumerate}
\end{proposition}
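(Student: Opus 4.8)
The plan is to prove part (2) first, since parts (1) and (3) follow from it by standard arguments. Using the paving, order the cells so that $Y_j \coloneqq U_1 \cup \cdots \cup U_j$ is closed in $Y$ for every $j$; this yields a filtration $\emptyset = Y_0 \subset Y_1 \subset \cdots \subset Y_N = Y$ by $\TT$-stable closed subvarieties with $Y_j \setminus Y_{j-1} = U_j \cong \bA^{n_j}$. I would prove by induction on $j$ that $H^*_\TT(Y_j; \Z)$ is a free $\Lambda$-module, concentrated in even degrees, with basis the classes of the cell closures $\overline{U_1}, \dots, \overline{U_j}$. The inductive step is the long exact sequence in equivariant cohomology of the pair $(Y_j, Y_{j-1})$ combined with the identification of $H^*_\TT(Y_j, Y_{j-1})$ as a free $\Lambda$-module on one generator in cohomological degree $2n_j$: granting this, both outer terms of the long exact sequence are free and concentrated in even degrees, so the connecting maps vanish for parity reasons, the sequence splits, and the basis propagates up the filtration. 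This is the classical ``affine paving'' theorem in equivariant cohomology (compare the treatments of equivariant intersection theory by Edidin--Graham, Brion, and Anderson--Fulton), and I would cite it rather than reprove it.

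The identification of $H^*_\TT(Y_j, Y_{j-1})$ is the key technical point, amounting to an equivariant Thom isomorphism for the cell $U_j$. Since $Y_{j-1}$ is a closed subvariety, this relative group is the equivariant compactly-supported cohomology of $U_j \cong \bA^{n_j}$, which one computes by passing to finite-dimensional approximations $E_N$ of $E\TT$ and using that $\bA^{n_j} \times_\TT E_N \to E_N/\TT$ is a fibration with contractible fibre; this gives $H^*_\TT(Y_j, Y_{j-1}) \cong \Lambda$ placed in degree $2n_j$. I expect this to be where the most care is needed, both because $Y_j$ may be singular and because the $\TT$-action on $\bA^{n_j}$ need not a priori be linear --- the fibration-with-contractible-fibre observation has the virtue of sidestepping the (delicate) linearization question altogether.

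For part (1), note first that a point of $Y$ lies in a unique cell $U_i$, which is $\TT$-stable with $(U_i)^\TT = \{c_i\}$; hence $Y^\TT = \{c_1, \dots, c_N\}$ and so $H^*_\TT(Y^\TT;\Z) \cong \bigoplus_{i=1}^N H^*_\TT(pt;\Z)$. For injectivity of the restriction map I would use triangularity of the cell classes at the fixed points: ordering as above with $\overline{U_i} \subseteq Y_i$, for $i < k$ one has $c_k \notin \overline{U_i}$, so $[U_i]_\TT$ restricts to $0$ at $c_k$, while $[U_i]_\TT$ restricts at $c_i$ to the equivariant Euler class of the normal space $T_{c_i}Y / T_{c_i}\overline{U_i}$. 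This Euler class is a nonzero element of the integral domain $\Lambda$: $c_i$ lies in the smooth open cell $U_i$, so $Y$ and $\overline{U_i}$ are smooth at $c_i$, and $c_i$ is an isolated $\TT$-fixed point, forcing the normal space to have no nonzero $\TT$-invariants. Thus the matrix of restrictions is triangular over $\Lambda$ with nonzero diagonal, and since the $[U_i]_\TT$ form a $\Lambda$-basis by part (2), the restriction $H^*_\TT(Y;\Z) \to H^*_\TT(Y^\TT;\Z)$ is injective; alternatively, the localization theorem shows its kernel is $\Lambda$-torsion, hence trivial by freeness.

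Part (3) is then formal. The morphism $\TT' \to \TT$ induces $B\TT' \to B\TT$ together with a map $Y \times_{\TT'} E\TT' \to Y \times_\TT E\TT$ over it, realizing $Y \times_{\TT'} E\TT'$ up to homotopy as the fibre product $B\TT' \times_{B\TT} (Y \times_\TT E\TT)$. Because $H^*_\TT(Y)$ is a free --- in particular flat --- $H^*_\TT(pt)$-module by part (2), the Eilenberg--Moore spectral sequence of this square collapses, giving $H^*_{\TT'}(Y;\Z) \cong H^*_{\TT'}(pt;\Z) \otimes_{H^*_\TT(pt;\Z)} H^*_\TT(Y;\Z)$. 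Equivalently, the affine paving forces $H^*(Y;\Z)$ to be free and concentrated in even degrees, so every torus acts equivariantly formally on $Y$ and a graded Nakayama argument identifies the two sides; note that one cannot simply re-run part (2) for $\TT'$, since the cells $U_i$ need not have unique $\TT'$-fixed points.
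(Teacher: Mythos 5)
Your route to part (2) --- the closed filtration $Y_j$, the long exact sequence of the pair $(Y_j,Y_{j-1})$, and the identification of $H^*_\TT(Y_j,Y_{j-1};\Z)$ with a copy of $H^*_\TT(pt;\Z)$ shifted into degree $2n_j$ via the equivariant compactly supported cohomology of the open cell --- is the standard cohomological form of the affine-paving theorem, and it is fine (you rightly propose to cite it rather than reprove it). The paper instead quotes the dual statement for equivariant Borel--Moore homology from Anderson--Fulton and dualizes over $H^*_\TT(pt)$ following Graham; the two are equivalent, but the Borel--Moore formulation has the advantage that the classes $[\overline{U_i}]_\TT$ of the (possibly singular) cell closures come for free as fundamental classes, whereas in your version one should say explicitly that the basis elements are the images of the relative Thom classes and that these agree with the classes named in the statement. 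For part (3) the paper simply observes that this basis is compatible with change of groups, so both sides are free modules on matching bases; your Eilenberg--Moore / equivariant-formality argument proves the same thing with heavier machinery but is correct (and your parenthetical worry is not really an obstruction: re-running part (2) for $\TT'$ only needs the cells to be $\TT'$-invariant, which they are; uniqueness of fixed points enters only in part (1)). For part (1) the paper invokes the localization package, which is exactly your fallback argument: the kernel of restriction to $Y^\TT$ is torsion over $H^*_\TT(pt)$, hence zero by the freeness from part (2).

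Your primary argument for part (1), however, contains a genuine error: from ``$c_i$ lies in the smooth open cell $U_i$'' you conclude that $Y$ is smooth at $c_i$, but $U_i$ is only locally closed in $Y$ (open in $Y_i$, not in $Y$), so $Y$ may well be singular at the fixed points of the lower cells; indeed the varieties $J_n\C\bP^m$ to which this proposition is applied in the paper are non-normal, with singularities at torus-fixed points. Consequently the restriction of $[\overline{U_i}]_\TT$ at $c_i$ is not literally the equivariant Euler class of a normal space $T_{c_i}Y/T_{c_i}\overline{U_i}$, and the triangularity-with-nonzero-diagonal argument does not go through as written in the singular case. Since you also supply the localization argument, part (1) still stands; just drop (or substantially repair) the Euler-class version.
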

\begin{proof}
Since $Y$ has a cell decomposition consisting of even-dimensional $\TT$-invariant cells, its equivariant Borel--Moore homology $\overline{H}^\TT_*(Y;\Z)$ is free with basis given by the cell closures, see e.g., \cite[Chapter 17 Proposition 1.2]{AndersonFulton}. Since $H^*_\TT(Y;\Z)$ is the dual of $\overline{H}^\TT_*(Y;\Z)$ over $H^*_\TT(pt)$, statement (\ref{general-prop-basis}) follows, \emph{cf}.~\cite[Proposition 2.1(b)]{Graham}. By construction, this basis is compatible with change of groups, so (\ref{general-prop-base-change}) holds. Lastly, (\ref{general-prop-injection}) follows from the localization package, see \cite[Chapter 17 Theorem 3.1]{AndersonFulton}.
%
%
\end{proof}

Let $\Lambda=H^*_T(pt; \Z)=\Z[y_1,y_2,\dots]$ and $\Lambda_m=H^*_{T_{m}}(pt; \Z)=\Z[y_1,\dots,y_m]$. 
Applying Proposition \ref{prop:base-change-HT} to $J_n\C\bP^m$ with $\TT=T_{m+1}$ and $\TT'=T$, and using the fact that $J_n\C\bP^m$ is a complex projective variety by Theorem \ref{thm:JnPm}, we have an injection
\begin{equation}\label{eqn:injHTFixedPt}
\iota_{n,m}\colon H^*_{T_{m+1}}(J_n\C\bP^m; \Z)\hookrightarrow H^*_{T_{m+1}}((J_n\C\bP^m)^{T_{m+1}}; \Z)\simeq \bigoplus_{\alpha\in\cC_{n,m}} \Lambda_{m+1},
\end{equation}
as well as an isomorphism 
\[
H^*_{T_{m+1}}(J_n\C\bP^m) \simeq \bigoplus_{\alpha\in\cC_{n,m}}\Lambda_{m+1} x_\alpha
\]
of graded $\Lambda_{m+1}$-modules, where $x_\alpha$ denotes the class associated to the $T$-invariant cell $e_\alpha$. Furthermore, we have the following.

\begin{corollary}\label{cor:limit-JCPinfty}
We have
\[
H^*_T(J\C\bP^\infty)\simeq\lim_{n,m\to\infty}\Lambda\otimes_{\Lambda_{m+1}} H^*_{T_{m+1}}(J_n\C\bP^m),
\]
where the limit is in the category of graded $\Lambda$-algebras. In particular,
\[
H^*_T(J\C\bP^\infty)=\bigoplus_{\alpha\in\cC}\Lambda x_\alpha.
\]
\end{corollary}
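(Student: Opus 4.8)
The plan is to realize $J\C\bP^\infty$ as a colimit of the projective varieties $J_n\C\bP^m$ and then identify its torus-equivariant cohomology with the corresponding inverse limit one cohomological degree at a time, the key input being a connectivity estimate for the cellular inclusions $J_n\C\bP^m\hookrightarrow J\C\bP^\infty$. First I would set up the colimit: since $\C\bP^\infty=\operatorname{colim}_m\C\bP^m$ as CW complexes, and since both forming $n$-fold products and passing to the quotient by $\sim$ commute with such colimits, one gets $J\C\bP^\infty=\operatorname{colim}_{n,m}J_n\C\bP^m$ as a nested family of CW subcomplexes indexed by the product poset, with the diagonal $N\mapsto J_N\C\bP^N$ cofinal, so every inverse limit below may equally be taken along it. Next, combining the isomorphism $H^*_{T_{m+1}}(J_n\C\bP^m)\simeq\bigoplus_{\alpha\in\cC_{n,m}}\Lambda_{m+1}x_\alpha$ recorded just before this corollary with the change-of-groups part of Proposition~\ref{prop:base-change-HT}, we obtain
\[
\Lambda\otimes_{\Lambda_{m+1}}H^*_{T_{m+1}}(J_n\C\bP^m)=H^*_T(J_n\C\bP^m)=\bigoplus_{\alpha\in\cC_{n,m}}\Lambda x_\alpha
\]
as graded $\Lambda$-algebras, each $x_\alpha$ being homogeneous of degree $2|\alpha|$. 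Since the inclusions $J_n\C\bP^m\hookrightarrow J_{n'}\C\bP^{m'}$ and $J_n\C\bP^m\hookrightarrow J\C\bP^\infty$ are maps of $T$-spaces, pullback on equivariant cohomology supplies a compatible family of graded $\Lambda$-algebra restriction maps and hence a canonical graded $\Lambda$-algebra homomorphism
\[
\Phi\colon H^*_T(J\C\bP^\infty)\longrightarrow\lim_{n,m}H^*_T(J_n\C\bP^m);
\]
it then suffices to show $\Phi$ is an isomorphism.

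The crux is a connectivity computation. The cell $e_\alpha$ has real dimension $2|\alpha|$, and a composition $\alpha$ fails to lie in $\cC_{N,N}$ exactly when its length or one of its entries exceeds $N$; in either case $|\alpha|\ge N+1$. Hence the relative CW pair $(J\C\bP^\infty,\,J_N\C\bP^N)$ — and also $(J_{N'}\C\bP^{N'},\,J_N\C\bP^N)$ for any $N'\ge N$ — has relative cells only in dimensions $\ge 2N+2$, so these inclusions of spaces are $(2N+1)$-connected. Passing to the Borel fibrations $F\to ET\times_T F\to BT$ and comparing Serre spectral sequences (using that $BT$ is simply connected, as $T$ is connected, so there is no local-coefficient subtlety), the restriction maps $H^d_T(J\C\bP^\infty)\to H^d_T(J_N\C\bP^N)$ and $H^d_T(J_{N'}\C\bP^{N'})\to H^d_T(J_N\C\bP^N)$ are isomorphisms whenever $d\le 2N$. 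Thus for each fixed $d$ the inverse system $\{H^d_T(J_N\C\bP^N)\}_N$ is pro-constant: all of its transition maps are isomorphisms once $N$ is large relative to $d$, its limit is that stable value, and $\Phi$ is an isomorphism in degree $d$; assembling over all $d$ proves the first displayed isomorphism. (Equivalently, this per-degree eventual constancy is the vanishing of the $\lim^1$-term in the Milnor sequence of the mapping telescope $\operatorname{colim}_{n,m}(ET\times_T J_n\C\bP^m)$.)

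For the ``in particular'' clause I would then evaluate the limit. Fix $d$; since $x_\alpha$ has degree $2|\alpha|$ and $\Lambda$ is generated in degree $2$, the rank-one summand $\Lambda x_\alpha$ contributes in degree $d$ only when $2|\alpha|\le d$, and only finitely many compositions satisfy that bound. Once $n,m\ge d/2$ all such $\alpha$ lie in $\cC_{n,m}$, so the degree-$d$ part of $\bigoplus_{\alpha\in\cC_{n,m}}\Lambda x_\alpha$ equals $\bigoplus_{\alpha\in\cC,\ 2|\alpha|\le d}\Lambda_{d-2|\alpha|}x_\alpha$, independently of $(n,m)$; taking the (now trivial) limit and summing over $d$ gives $H^*_T(J\C\bP^\infty)=\bigoplus_{\alpha\in\cC}\Lambda x_\alpha$.

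The step I expect to be the main obstacle is the connectivity argument: one must carry out carefully the cell-dimension bookkeeping determining which cells of $J\C\bP^\infty$ already lie in $J_N\C\bP^N$, and then transfer a $(2N+1)$-connected inclusion of underlying spaces to a $(2N+1)$-connected map of Borel constructions — this is where the simple connectivity of $BT$ and the spectral-sequence comparison enter. The remaining points (cofinality of the diagonal, compatibility of all maps with the graded $\Lambda$-algebra structure, and the behavior of the base change $\Lambda\otimes_{\Lambda_{m+1}}(-)$) are routine once this is in hand.
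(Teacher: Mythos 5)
Your proposal is correct and follows essentially the same route as the paper: base change via Proposition~\ref{prop:base-change-HT}(\ref{general-prop-base-change}), then degree-wise stabilization of $H^d_T$ along the system, coming from the observation that every cell of $J\C\bP^\infty$ outside $J_N\C\bP^N$ has real dimension at least $2N+2$, followed by the finitary degree count for the ``in particular'' clause. The only difference is the technical device for transferring stabilization to equivariant cohomology: you compare Serre spectral sequences of the Borel fibrations over the simply connected $BT$, whereas the paper argues directly that, in each fixed dimension, the cells of the mixing space $(J\C\bP^\infty)_T$ already appear in $(J_n\C\bP^m)_T$ for $n,m\gg0$ --- a cosmetic rather than substantive distinction (and note your $\Lambda_{d-2|\alpha|}$ clashes with the paper's use of $\Lambda_m$ for $\Z[y_1,\dots,y_m]$; you mean the degree-$(d-2|\alpha|)$ graded piece of $\Lambda$).
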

\begin{proof}
Since the $T$-action on $J_n\C\bP^m$ is induced by the surjection $T\twoheadrightarrow T_{m+1}$, Proposition \ref{prop:base-change-HT}(\ref{general-prop-base-change}) shows $H^*_T(J_n\C\bP^m)=\Lambda\otimes_{\Lambda_{m+1}}H^*_{T_{m+1}}(J_n\C\bP^m)$. It therefore suffices to prove that
\[
H^*_T(J\C\bP^\infty)\simeq\lim_{n,m\to\infty}H^*_{T}(J_n\C\bP^m).
\]
We have a diagram 
\[
\xymatrix{
J_n\C\bP^m \times ET\ \ar@{^{(}->}[r]\ar@{->>}[d]_-{\pi_{n,m}}  & J\C\bP^\infty \times ET\ar@{->>}[d]^-{\pi}\\
(J_n\C\bP^m)_T\ar[r] & (J\C\bP^\infty)_T
}
\]
of mixing spaces.
It is straightforward to check that the bottom horizontal map is an inclusion and that the diagram is cartesian. It follows that $(J_n\C\bP^m)_T\subset (J\C\bP^\infty)_T$ is closed. Moreover, since $J\C\bP^\infty=\bigcup_{n,m}J_n\C\bP^m$, we see that $(J\C\bP^\infty)_T=\bigcup_{n,m}(J_n\C\bP^m)_T$.

For any fixed $k$, every $k$-cell of $J\C\bP^\infty \times ET$ is a $k$-cell of $J_n\C\bP^m \times ET$ for $n,m\gg0$. Since the quotient maps $\pi_{n,m}$ and $\pi$ respect the $T$-invariant cell structures on $J_n\C\bP^m \times ET$ and $J\C\bP^\infty \times ET$, it follows that for any fixed $k$, every $k$-cell of $(J\C\bP^\infty)_T$ is a $k$-cell of $(J_n\C\bP^m)_T$ for $n,m\gg0$. Thus, the map
\[
H^k_T(J\C\bP^\infty;\Z)=H^k((J\C\bP^\infty)_T;\Z)\xrightarrow{\simeq} H^k((J_n\C\bP^m)_T;\Z)=H^k_T(J_n\C\bP^m;\Z)
\]
is an isomorphism for $n,m\gg 0$.
\end{proof}

We now turn to the proof of our second main result, Theorem~\ref{james-reduced-cells->monomial-basis}.


\begin{proof}[{Proof of Theorem \ref{james-reduced-cells->monomial-basis}}]
We have
\[
H^*_{T_{m+1}}(\C\bP^m; \Z)=\Lambda_{m+1}[x]/\prod_{j=1}^{m+1}(x-y_j),
\]
where $x$ is the equivariant class of the cell $e_2$, see e.g., \cite[Section 2.6]{AndersonFulton}. Then 
\[
H^*_{T_{m+1}}((\C\bP^m)^n; \Z)=H^*_{T_{m+1}}(\C\bP^m; \Z)^{\otimes n}=\Lambda_{m+1}[x_1,\dots,x_n]/(\prod_{j=1}^{m+1}(x_i-y_j)),
\]
where
\[
[e_{2a_1}\times\dots\times e_{2a_n}]_{T_{m+1}}=\prod_{i=1}^n\prod_{j=1}^{a_i}(x_i-y_j).
\]
%

Let $q_{n,m}\colon (\C\bP^m)^n\to J_n\C\bP^m$ be the quotient map. We claim that the induced map $q_{n,m}^*$ on $T_{m+1}$-equivariant cohomology is injective. To see this, recall we have an injection \eqref{eqn:injHTFixedPt} defined above. We also have a similar injection
\[
\iota'_{n,m}\colon H^*_{T_{m+1}}((\C\bP^m)^n; \Z)\hookrightarrow H^*_{T_{m+1}}(((\C\bP^m)^n)^{T_{m+1}}; \Z)\simeq \bigoplus_{[m]^n}\Lambda_{m+1}.
\]
By functoriality, we have a commutative diagram
\[
\xymatrix{
H^*_{T_{m+1}}((\C\bP^m)^n; \Z)\ar@{^{(}->}[r]^-{\iota'_{n,m}} & \bigoplus_{[m]^n}\Lambda_{m+1}\\
H^*_{T_{m+1}}(J_n\C\bP^m; \Z)\ar[u]^-{q_{n,m}^*}\ar@{^{(}->}[r]^-{\iota_{n,m}} & \bigoplus_{\alpha\in\cC_{n,m}} \Lambda_{m+1}\ar[u]^-{p^*_{n,m}}
}
\]
where $p_{n,m}\colon ((\C\bP^m)^n)^{T_{m+1}}\to (J_n\C\bP^m)^{T_{m+1}}$ is the induced map on the level of fixed points. Now note that $p_{n,m}$ has a splitting that sends the unique fixed point in the cell $e_{(\alpha_1,\dots,\alpha_k)}$ to the unique fixed point in the cell $e_{2\alpha_1}\times\dots\times e_{2\alpha_k}\times e_0\times\dots\times e_0$. Thus, $p_{n,m}^*$ is injective, and hence by commutativity of the above diagram, $q_{n,m}^*$ is injective as well.

We now explicitly compute the image of the injection $q_{n,m}^*$. If $\alpha=(\alpha_1,\dots,\alpha_k)\in\cC_{n,m}$, then 
\[
q_{n,m}^{-1}(e_\alpha)=\coprod_{\iota} e_\iota,
\]
where the coproduct is over weak compositions $\iota=(i_1,\dots, i_n)\in[m]^n$ such that $(i_1,\dots, i_n)^+=\alpha$, and $e_\iota \coloneqq e_{2i_1}\times\dots\times e_{2i_n}$. 
As a result, we have
\[
q_{n,m}^*(x_{(\alpha_1,\dots,\alpha_k)})=\sum_\iota [e_\iota]_{T_{m+1}}=\sum_{1\leq i_1<\dots<i_k\leq n}\prod_{\ell=1}^k\prod_{j_\ell=1}^{\alpha_\ell}(x_{i_\ell}-y_{j_\ell}) \eqqcolon M_{n,m+1,(\alpha_1,\dots,\alpha_k)}(\bx,\by);
\]
notice that $M_{n,m+1,(\alpha_1,\dots,\alpha_k)}(\bx,\by)$ is the double monomial quasisymmetric function $M_{(\alpha_1,\dots,\alpha_k)}(\bx,\by)$ as introduced in \eqref{eq:double_monomial}, truncated to the finite set of variables $x_1,\dots,x_n,y_1,\dots,y_{m+1}$.

Applying Corollary \ref{cor:limit-JCPinfty} and taking limits in the category of graded algebras over the graded ring $\Lambda$, we obtain an injection
\[
H^*_T(J\C\bP^\infty;\Z)\hookrightarrow \lim_{n,m\to\infty} \Lambda[x_1,\dots,x_n]/(\prod_{j=1}^{m+1}(x_i-y_j))=\Lambda[[x_1,x_2,\dots]].
\]
The image of the class $x_\alpha$ under this injection must be $M_\alpha(\bx,\by)$ since for all $n$ and $m$ sufficiently large, $x_\alpha\in H^*_T(J(\C\bP^\infty),\Z)$ maps to the truncation $M_{n,m+1,\alpha}(\bx,\by)$ in $\Lambda_{m+1}[x_1,\dots,x_n]/(\prod_{j=1}^{m+1}(x_i-y_j))$.
Lastly, by graded Nakayama, the $\Lambda$-submodule of $\Lambda[[x_1,x_2,\dots]]$ generated by the $M_\alpha(\bx,\by)$ is $\QSym(\Lambda)$, yielding the isomorphism $H^*_T(J\C\bP^\infty;\Z)\simeq\QSym(\Lambda)$.
\end{proof}

\section{Double quasisymmetric analogue of the Littlewood--Richardson rule}\label{sec:LR}

In this section, we give a cancellation-free combinatorial formula for the product $M_\alpha(\bx,\by)\cdot M_\beta(\bx,\by)$ of double monomial quasisymmetric functions. For the specializations to ordinary monomial quasisymmetric functions $M_\alpha(\bx) \coloneqq M_\alpha(\bx,0)$ such a rule appears in work of M.~Hazewinkel \cite{Hazewinkel}, and our rule extends his into the equivariant setting.

To give our rule, we first recall work of Thomas--Yong \cite{Thomas.Yong18} on equivariant Schubert calculus. (Their results hold more generally for Grassmannians, but we will only need the specializations of their work to projective spaces.) Let $c > a \in \Z_{\geq 0}$. Then a \newword{skew edge-labeled tableau} $S$ of shape $c/a$ consists of a row of $c$ boxes such that:
\begin{itemize}
\item the rightmost $c-a$ boxes of $S$ are filled with a $\bullet$ label,
\item the remaining $a$ boxes are empty, and
\item the lower horizontal edges of the leftmost $a$ boxes may optionally and independently carry an edge label of $\bullet$.
\end{itemize}
Letting $b$ be the total number of $\bullet$s on boxes and edges of $S$, we say $S$ has \newword{content} $b$. Note that $c-a \leq b \leq c$. To each such $S$, Thomas--Yong \cite{Thomas.Yong18} associate a weight $\wt(S)\in\Lambda$ as follows:~if one of the last $c-a$ boxes is unlabeled, then $\wt(S)=0$. Otherwise, let $E \subseteq [a]$ be the set with $i\in E$ if and only if the $i$-th box of $S$ carries an edge label; then
\[
\wt(S)=\prod_{i\in E}(y_i - y_{i+1+r(i)}),
\]
where $r(i)$ is the number of (box or edge) labels appearing strictly right of the $i$-th box. Note that if $\wt(S)\neq0$, then $c\leq a+b$.

\begin{example}
Let $c = 7$ and $a = 4$. Then 
	\[\Scale[0.9]{\begin{picture}(140,33)
\ytableausetup{boxsize=2.0em}
\put(-20,6) {$S=$}
\put(0,0){$\ytableaushort{ \blank \blank \blank \blank \bullet \bullet \bullet}$}
\put(8,-3){$\bullet$}
\put(49,-3){$\bullet$}
\end{picture}} 
\quad \text{and} \quad
 \Scale[0.9]{\begin{picture}(140,33)
\ytableausetup{boxsize=2.0em}
\put(0,6) {$S'=$}
\put(23,0){$\ytableaushort{ \blank \blank \blank \blank \bullet \bullet \bullet}$}
\put(31,-3){$\bullet$}
\put(52,-3){$\bullet$}
\put(92,-3){$\bullet$}
\end{picture}}
\] 
are both skew edge-labeled tableaux of shape $c/a$. The content of $S$ is $5$, while the content of $S'$ is $6$. Moreover,
\[
\wt(S) = (y_1 - y_{1+1+4})  (y_3 - y_{3+1+3}) = (y_1-y_6)(y_3-y_7),
\]
while 
\[
\wt(S') = (y_1 - y_{1+1+5})  (y_2 - y_{2+1+4}) (y_4 - y_{4+1+3}) = (y_1-y_7)(y_2-y_7)(y_4-y_8). \qedhere
\]
\end{example}

An argument analogous to the proof of Corollary \ref{cor:limit-JCPinfty} shows that
\[
H^*_T(\C\bP^\infty;\Z)=\lim_{m\to\infty}\Lambda\otimes_{\Lambda_{m+1}} H^*_{T_{m+1}}(\C\bP^m;\Z).
\]
Then by \cite[Theorem 1.2]{Thomas.Yong18}, we have the following equivariant Littlewood--Richardson rule for infinite projective space.
We let $x_k\in H^*_T(\C\bP^\infty;\Z)$ denote the equivariant class of the $2k$-cell $e_{2k}$. 

\begin{theorem}[\emph{cf}.~{\cite[Theorem 1.2]{Thomas.Yong18}}]\label{thm:Thomas-Yong}
In $H^*_T(\C\bP^\infty;\Z)$, we have
\[
x_a\cdot x_b=\sum_c \sum_S \wt(S) x_c
\]
where $S$ runs over all skew edge-labeled tableaux of shape $c/a$ and content $b$.
\end{theorem}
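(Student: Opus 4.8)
The plan is to deduce this statement directly from \cite[Theorem 1.2]{Thomas.Yong18} by an explicit limiting argument, entirely parallel to the proof of Corollary~\ref{cor:limit-JCPinfty}. First I would recall that Thomas--Yong work with the equivariant cohomology of the Grassmannian $\mathrm{Gr}(k,n)$, and that taking $k=1$ (or dually $k=n-1$) specializes their Littlewood--Richardson rule to $H^*_{T_{m+1}}(\C\bP^m;\Z)$: the Schubert classes $x_a$ for $0\leq a\leq m$ form a basis, and their structure constants are given by the weighted count of skew edge-labeled tableaux of shape $c/a$ and content $b$, with weights $\wt(S)\in\Lambda_{m+1}$ as defined above. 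The only thing to check at finite level is that when $m$ is large enough relative to $a,b,c$, all the tableaux contributing nonzero weight fit inside the allowed range and none are lost or gained by increasing $m$ — this is immediate from the observation (noted in the excerpt) that $\wt(S)\neq 0$ forces $c\leq a+b$, so the set of tableaux of shape $c/a$ and content $b$ stabilizes as soon as $m\geq a+b$.

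Next I would assemble the finite-level statements into the claimed identity over $H^*_T(\C\bP^\infty;\Z)$. As the excerpt observes, the argument of Corollary~\ref{cor:limit-JCPinfty} applied to $\C\bP^m$ in place of $J_n\C\bP^m$ (with $\TT=T_{m+1}$, $\TT'=T$, using Proposition~\ref{prop:base-change-HT}\eqref{general-prop-base-change}) gives
\[
H^*_T(\C\bP^\infty;\Z)\simeq\lim_{m\to\infty}\Lambda\otimes_{\Lambda_{m+1}}H^*_{T_{m+1}}(\C\bP^m;\Z),
\]
and under this limit the class $x_a\in H^*_T(\C\bP^\infty;\Z)$ restricts to the Schubert class $x_a\in H^*_{T_{m+1}}(\C\bP^m;\Z)$ for every $m\geq a$. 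Multiplying $x_a$ and $x_b$ in $H^*_T(\C\bP^\infty;\Z)$ and restricting to level $m$ (with $m\geq a+b$) therefore yields $\sum_c\sum_S\wt(S)x_c$, where the double sum is over $c\leq a+b$ and $S$ ranges over skew edge-labeled tableaux of shape $c/a$ and content $b$. Since this holds compatibly for all large $m$ and the right-hand side is already a finite sum independent of $m$ (again because $c\leq a+b$), it is the value of the product in the limit, which is $H^*_T(\C\bP^\infty;\Z)$ itself. This gives the displayed formula.

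I expect the main (though modest) obstacle to be purely bookkeeping: verifying that the $k=1$ specialization of the Thomas--Yong rule really produces exactly the combinatorial objects (``skew edge-labeled tableaux of shape $c/a$'') and weights as stated here, since their conventions are phrased for general partitions inside a $k\times(n-k)$ rectangle rather than for a single row. Concretely this amounts to unwinding that a Schubert class in $H^*_{T_{m+1}}(\C\bP^m)$ corresponds to the partition $(a)$ (a single part), that a ``skew shape'' in their sense collapses to a row of boxes with a $\bullet$-labeled portion, and that their edge-labeling convention matches the one recalled above — including the vanishing condition forcing $\wt(S)=0$ when one of the rightmost $c-a$ boxes is unlabeled, and the formula $\wt(S)=\prod_{i\in E}(y_i-y_{i+1+r(i)})$. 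Once this dictionary is in place, the limiting argument is routine and the theorem follows.
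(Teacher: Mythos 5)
Your proposal is correct and matches the paper's treatment: the paper likewise obtains this statement by specializing \cite[Theorem 1.2]{Thomas.Yong18} to projective space and passing to $H^*_T(\C\bP^\infty;\Z)$ via the limit $\lim_{m\to\infty}\Lambda\otimes_{\Lambda_{m+1}}H^*_{T_{m+1}}(\C\bP^m;\Z)$, established by an argument analogous to the proof of Corollary~\ref{cor:limit-JCPinfty}. Your extra bookkeeping (stabilization of the tableaux and weights once $m\geq a+b$, and the $k=1$ dictionary for the Thomas--Yong conventions) is exactly the routine verification the paper leaves implicit.
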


We introduce the following notation, related to the \emph{overlapping shuffles} of Hazewinkel \cite{Hazewinkel}.

\begin{definition}
Let $\alpha=(\alpha_1, \dots, \alpha_\ell)$, $\beta = (\beta_1, \dots, \beta_m)$, and $\gamma = (\gamma_1, \dots, \gamma_n)$ be compositions. Let $\iota : [\ell] \to [n]$ and $\jmath : [m] \to [n]$ be order-preserving injections. Let $\mathcal{S}_{\alpha,\beta,\iota,\jmath}^\gamma$ be the set of $n$-tuples $(S_1, \dots, S_n)$ where $S_i$ is a skew edge-labeled tableaux of shape $\gamma_i/\alpha_{\iota^{-1}(i)}$ and content $\beta_{\jmath^{-1}(i)}$. We take the convention that $\alpha_{\iota^{-1}(i)}=0$ if $i \notin \im \iota$, and similarly for $\beta_{\jmath^{-1}(i)}$. 
Note that if $[n] \neq \im \iota \cup \im \jmath$, then the set $\mathcal{S}_{\alpha,\beta,\iota,\jmath}^\gamma$ is empty.
For $(S_1, \dots, S_n) \in \mathcal{S}_{\alpha,\beta,\iota,\jmath}^\gamma$,
we let
\[
\wt(S_1,\dots,S_n) \coloneqq \prod_{i=1}^n\wt(S_i).
\]
Given an $n$-tuple $(S_1, \dots, S_n) \in \mathcal{S}_{\alpha,\beta,\iota,\jmath}^\gamma$, it is convenient to stack the $S_i$ on each other with $S_1$ at the top. We call such a stack a \newword{skyline edge-labeled tableau}.
\end{definition}

\begin{example}\label{ex:skyline}
	Let $\alpha = (3,2)$, $\beta = (2,3)$, and $\gamma = (3,2,4)$. Set $\iota(1) = 1$ and $\iota(2) = 3$. Also set $\jmath(1) = 2$ and $\jmath(2) = 3$. The unique skew edge-labeled tableau of shape $3/3$ and content $0$ is $\Scale[0.7]{\ytableaushort{\blank \blank \blank}}$. The unique skew edge-labeled tableau of shape $2/0$ and content $2$ is $\Scale[0.7]{\ytableaushort{\bullet \bullet}}$. There are exactly two skew edge-labeled tableaux of shape $4/2$ and content $3$: 
	\Scale[0.7]{\begin{picture}(100,33)
\ytableausetup{boxsize=2.0em}
\put(8,0){$\ytableaushort{ \blank \blank \bullet \bullet}$}
\put(16,-3){$\bullet$}
\end{picture}}
and 	
\Scale[0.7]{\begin{picture}(92,33)
\ytableausetup{boxsize=2.0em}
\put(8,0){$\ytableaushort{ \blank \blank \bullet \bullet}$}
\put(37,-3){$\bullet$}
\end{picture}}. Thus, we have
\[
\mathcal{S}_{\alpha,\beta,\iota,\jmath}^\gamma = \left\{ 
\Scale[0.8]{\begin{picture}(100,33)
\ytableausetup{boxsize=2.0em}
\put(8,15){$\ytableaushort{ \blank \blank \blank,\bullet \bullet,\blank \blank \bullet \bullet}$}
\put(16,-28.5){$\bullet$}
\end{picture}}
,
\Scale[0.8]{\begin{picture}(100,33)
\ytableausetup{boxsize=2.0em}
\put(8,15){$\ytableaushort{ \blank \blank \blank,\bullet \bullet,\blank \blank \bullet \bullet}$}
\put(37,-28.5){$\bullet$}
\end{picture}}
\right\}. 
\]
The weight of the first skyline edge-labeled tableau is $y_1 - y_4$, while the weight of the second is $y_2 - y_5$.
\end{example}

We then have the following combinatorial rule for the multiplication of double monomial quasisymmetric functions.

\begin{theorem}\label{thm:product-formula}
Let $\alpha=(\alpha_1, \dots, \alpha_\ell)$ and $\beta = (\beta_1, \dots, \beta_m)$ be compositions. Then
\[
M_\alpha(\bx,\by)\cdot M_\beta(\bx,\by)=\sum_\gamma c^\gamma_{\alpha,\beta} M_\gamma(\bx,\by),
\]
where the sum runs over compositions $\gamma = (\gamma_1, \dots, \gamma_n)$ and the structure coefficient $c^\gamma_{\alpha,\beta}$ is given by
\[
c^\gamma_{\alpha,\beta}=\sum_{\iota,\jmath}\sum_{(S_1,\dots,S_n)\in\mathcal{S}_{\alpha,\beta,\iota,\jmath}^\gamma} \!\!\!\!\!\!\!\!\!\!\wt(S_1,\dots,S_n),
\]
where $\iota : [\ell] \to [n]$ and $\jmath : [m] \to [n]$ range over order-preserving injections.
\end{theorem}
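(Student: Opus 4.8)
The plan is to reduce the product formula to the Thomas--Yong rule by pulling back along the quotient maps $q_n\colon(\C\bP^\infty)^n\to J_n\C\bP^\infty$. By Theorem~\ref{james-reduced-cells->monomial-basis}, computing $M_\alpha(\bx,\by)\cdot M_\beta(\bx,\by)$ is the same as computing the cup product $x_\alpha\cdot x_\beta$ in $H^*_T(J\C\bP^\infty;\Z)$, which is a finite $\Lambda$-linear combination of classes $x_\gamma$ (only finitely many $\gamma$ can occur, by degree). So I would fix $n$ large enough that every such $\gamma$ has length at most $n$ and work in $H^*_T(J_n\C\bP^\infty;\Z)$. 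The proof of Theorem~\ref{james-reduced-cells->monomial-basis} yields an injective homomorphism of graded $\Lambda$-algebras
\[
q_n^*\colon H^*_T(J_n\C\bP^\infty;\Z)\hookrightarrow H^*_T((\C\bP^\infty)^n;\Z)\cong H^*_T(\C\bP^\infty;\Z)^{\otimes_\Lambda n},
\]
the last isomorphism being Künneth (valid since the factors are free $\Lambda$-modules), under which $q_n^*(x_\gamma)=\sum_\delta\prod_i x_{\gamma_i}^{(\delta(i))}$, where $\delta$ runs over all order-preserving injections of $\{1,\dots,\mathrm{length}(\gamma)\}$ into $[n]$. It then suffices to expand $q_n^*(x_\alpha)\cdot q_n^*(x_\beta)$ in the target and recognize it as $\sum_\gamma c^\gamma_{\alpha,\beta}\,q_n^*(x_\gamma)$.

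Next I would multiply out and regroup by tensor factor, obtaining
\[
q_n^*(x_\alpha)\cdot q_n^*(x_\beta)=\sum_{\iota,\jmath}\ \prod_{k=1}^{n}\bigl(x_{\alpha_{\iota^{-1}(k)}}\cdot x_{\beta_{\jmath^{-1}(k)}}\bigr)^{(k)},
\]
with $\iota\colon[\ell]\hookrightarrow[n]$ and $\jmath\colon[m]\hookrightarrow[n]$ order-preserving injections and the conventions $x_0=1$ and $\alpha_{\iota^{-1}(k)}=0$ for $k\notin\im\iota$ (and likewise for $\beta$). Applying Theorem~\ref{thm:Thomas-Yong} in each slot rewrites the $k$-th factor as $\sum_{c_k,S_k}\wt(S_k)\,x_{c_k}^{(k)}$, with $S_k$ a skew edge-labeled tableau of shape $c_k/\alpha_{\iota^{-1}(k)}$ and content $\beta_{\jmath^{-1}(k)}$. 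Hence each term of the expansion is indexed by $(\iota,\jmath)$ together with an $n$-tuple $(S_1,\dots,S_n)$ of such tableaux --- that is, by a skyline edge-labeled tableau --- and contributes $\wt(S_1,\dots,S_n)\prod_k x_{c_k}^{(k)}$; since $\alpha,\beta$ are compositions, any nonzero term has $c_k\geq1$ exactly for $k\in\im\iota\cup\im\jmath$, so writing $P=\im\iota\cup\im\jmath$ and $\gamma=(c_1,\dots,c_n)^+$ the associated monomial is supported on $P$ with positive part $\gamma$.

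To finish, I would extract coefficients: the monomials $x_{a_1}^{(1)}\cdots x_{a_n}^{(n)}$ form a $\Lambda$-basis of the target, and $q_n^*(x_\gamma)$ is exactly the sum, each with coefficient $1$, of the monomials whose exponent vector has positive part $\gamma$, so in $\sum_\gamma c^\gamma q_n^*(x_\gamma)$ the coefficient of any monomial with positive part $\gamma$ is $c^\gamma$. Comparing with the expansion above, the coefficient of a monomial supported on $P=\{p_1<\dots<p_{n'}\}$ with positive part $\gamma=(\gamma_1,\dots,\gamma_{n'})$ is the sum, over order-preserving $\iota,\jmath$ with $\im\iota\cup\im\jmath=P$ and over $n$-tuples $(S_1,\dots,S_n)$ with $S_{p_i}$ of shape $\gamma_i/\alpha_{\iota^{-1}(p_i)}$ and content $\beta_{\jmath^{-1}(p_i)}$, of $\prod_i\wt(S_{p_i})$; carrying $P$ to $\{1,\dots,n'\}$ by the order isomorphism turns this into exactly $c^\gamma_{\alpha,\beta}$ (and in particular it does not depend on $P$). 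Thus $q_n^*(x_\alpha\cdot x_\beta)=\sum_\gamma c^\gamma_{\alpha,\beta}\,q_n^*(x_\gamma)$, and injectivity of $q_n^*$ together with Theorem~\ref{james-reduced-cells->monomial-basis} gives the stated formula.

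The routine but essential labour will be in that last step: matching the tensor-slot indexing by subsets $P\subseteq[n]$ with the intrinsic indexing of the statement by compositions $\gamma$ and order-preserving injections into $[\mathrm{length}(\gamma)]$, and checking the degenerate cases of Theorem~\ref{thm:Thomas-Yong}. Specifically, for a slot $k$ lying in $\im\iota\setminus\im\jmath$ (respectively $\im\jmath\setminus\im\iota$, respectively neither) one must verify directly from the definitions that the skew edge-labeled tableaux of shape $c_k/\alpha_{\iota^{-1}(k)}$ and content $0$ (respectively of shape $c_k/0$ and content $\beta_{\jmath^{-1}(k)}$, respectively of shape $0/0$ and content $0$) reproduce precisely $x_{\alpha_{\iota^{-1}(k)}}$ (respectively $x_{\beta_{\jmath^{-1}(k)}}$, respectively $1$), in accordance with the conventions built into $\mathcal{S}_{\alpha,\beta,\iota,\jmath}^\gamma$. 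That is where I expect to spend the most care; everything else is a direct unwinding of Theorems~\ref{james-reduced-cells->monomial-basis} and~\ref{thm:Thomas-Yong}.
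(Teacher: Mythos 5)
Your proposal is correct and is essentially the paper's own argument: after pulling back along $q_n^*$ (equivalently, writing out the truncated double monomial functions), you merge the two supports into a pair of order-preserving injections, apply Theorem~\ref{thm:Thomas-Yong} slot-by-slot, and regroup --- exactly the computation in the paper's proof, with your added wrapper of injectivity of $q_n^*$ and coefficient extraction against the monomial basis playing the role of the paper's direct recognition of the resulting expression as $\sum_\gamma c^\gamma_{\alpha,\beta}M_\gamma(\bx,\by)$. The only point to tidy is to route the K\"unneth/basis claims through the finite truncations $J_n\C\bP^m$ with torus $T_{m+1}$ and then pass to the limit (as in Corollary~\ref{cor:limit-JCPinfty}), rather than asserting them directly for $\C\bP^\infty$ and $T$; the degenerate tableau conventions you flag are indeed the ones built into $\mathcal{S}^\gamma_{\alpha,\beta,\iota,\jmath}$ and check out.
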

\begin{proof}
We have
\[
M_\alpha(\bx,\by)=\sum_{i_1<\dots<i_\ell}\prod_{L=1}^\ell\prod_{i'_L\leq\alpha_L}(x_{i_L}-y_{i'_L})
\]
and
\[
M_\beta(\bx,\by)=\sum_{j_1<\dots<j_m}\prod_{M=1}^m\prod_{j'_M\leq\beta_M}(x_{j_M}-y_{j'_M}).
\]
Given any choices $i_1<\dots<i_\ell$ and $j_1<\dots<j_m$, we uniquely obtain $k_1<\dots<k_n$ with 
\[
\{i_1,\dots,i_\ell,j_1,\dots,j_m\}=\{k_1,\dots,k_n\}.
\]
 We also obtain order-preserving injections $\iota\colon[\ell]\to[n]$ and $\jmath\colon[m]\to [n]$ defined by $i_L=k_{\iota(L)}$ and $j_M=k_{\jmath(M)}$. We see then that $M_\alpha(\bx,\by)\cdot M_\beta(\bx,\by)$ is the sum over all choices of order preserving injections $\iota,\jmath$ of
\begin{equation}\label{eqn:product-formula1}
\sum_{k_1<\dots<k_n}\prod_{L=1}^\ell\prod_{M=1}^m \prod_{i'_L\leq\alpha_L}(x_{k_{\iota(L)}}-y_{i'_L})\cdot \prod_{j'_M\leq\beta_M}(x_{k_{\jmath(M)}}-y_{j'_M}).
\end{equation}
This is also equal to
\begin{equation}\label{eqn:product-formula2}
\sum_{k_1<\dots<k_n}\prod_{N=1}^n \prod_{{i'_N}\leq\alpha_{\iota^{-1}(N)}}(x_{k_{N}}-y_{i'_N})\cdot \prod_{j'_N\leq\beta_{\jmath^{-1}(N)}}(x_{k_{N}}-y_{j'_N}),
\end{equation}
where recall our convention that if $N$ is not in the image of $\iota$, then $\alpha_{\iota^{-1}(N)}=0$ and hence the product $\prod_{{i'_N}\leq\alpha_{\iota^{-1}(N)}}(x_{k_{N}}-y_{i'_N})$ is $1$; similarly if $N$ is not in the image of $\jmath$.

In order to write this expression as a sum of double monomial quasisymmetric functions, we must understand how to multiply $\prod_{{i'_N}\leq\alpha_{\iota^{-1}(N)}}(x_{k_{N}}-y_{i'_N})$ and $\prod_{j'_N\leq\beta_{\jmath^{-1}(N)}}(x_{k_{N}}-y_{j'_N})$ when $N$ is in the image of both $\iota$ and $\jmath$. For this, recall that $\prod_{i'\leq\alpha_{\iota^{-1}(N)}}(x-y_{i'})$ is nothing other than the equivariant cohomology class associated to the cell $e_{2\alpha_{\iota^{-1}(N)}}$ of projective space, and similarly $\prod_{j'\leq\beta_{\jmath^{-1}(N)}}(x-y_{j'})$ is the equivariant cohomology class associated to the cell $e_{2\beta_{\jmath^{-1}(N)}}$. Thus, by the equivariant Littlewood--Richardson rule of Theorem~\ref{thm:Thomas-Yong}, equation \eqref{eqn:product-formula2} simplifies as
\[
\sum_{k_1<\dots<k_n}\sum_{\gamma_1,\dots,\gamma_n}\sum_{S_1,\dots,S_n}\prod_{N=1}^n\wt(S_N) \prod_{{k'_N}\leq \gamma_N}(x_{k_{N}}-y_{k'_N}),
\]
where $c_1,\dots,c_n$ are positive integers and $S_N$ is a skew edge-labeled tableau of shape $\gamma_N/\alpha_{\iota^{-1}(N)}$ and content $\beta_{\jmath^{-1}(N)}$. Thus, letting $\gamma=(\gamma_1,\dots,\gamma_n)$ we have arrived at our desired formula.
\end{proof}

We note that an alternative rule to Theorem~\ref{thm:product-formula} can be obtained by replacing use of \cite{Thomas.Yong18} with any other equivariant Littlewood--Richardson rule, for example those of \cite{Kreiman} or \cite{Knutson.Tao:HT}.

\begin{example}\label{ex:LR}
	Let $\alpha = (3,2)$, $\beta = (2,3)$, and $\gamma = (3,2,4)$, as in Example~\ref{ex:skyline}. We compute the equivariant structure coefficient $c^\gamma_{\alpha,\beta}$ by Theorem~\ref{thm:product-formula}. A priori, there are three injections $\iota$ to consider. However, since $\alpha_1 > \gamma_2$, there are no skyline edge-labeled tableaux if $\iota(1) = 2$. Hence, it suffices to consider the injection $\iota_1 : [2] \to [3]$ defined by $\iota_1(1) = 1$ and $\iota_1(2) = 2$ and the injection $\iota_2$ defined by $\iota_2(1) = 1$ and $\iota_2(2) = 3$.  Similarly, there are no skyline edge-labeled tableaux if $\jmath(2) = 2$. Hence, it suffices to consider the injection $\jmath_1$ defined by $\jmath_1(1) = 1$ and $\jmath_1(2) = 3$ and the injection $\jmath_2$ defined by $\jmath_2(1) = 2$ and $\jmath_2(2) = 3$. Note that $\iota_2$ and $\jmath_2$ are the maps $\iota$ and $\jmath$ of Example~\ref{ex:skyline}, respectively.

		For the pair of injections $\iota_1, \jmath_1$, there are no skyline edge-labeled tableaux, as there is no skew edge-labeled tableau of shape $4/0$ and content only $3$. For the pair $\iota_1, \jmath_2$, there are also no skyline edge-labeled tableaux, for the same reason.
	For the pair of injections $\iota_2, \jmath_1$, there are no skyline edge-labeled tableaux, as $2$ is not in the image of either injection. Finally, for the pair $\iota_2, \jmath_2$, we have the skyline edge-labeled tableaux computed in Example~\ref{ex:skyline}, contributing a total weight of $(y_1-y_4) + (y_2 - y_5)$.
	
	Thus, $c^{(3,2,4)}_{(3,2),(2,3)} = (y_1-y_4) + (y_2 - y_5)$.
\end{example}

\bibliographystyle{amsalpha}
\newcommand{\etalchar}[1]{$^{#1}$}
\providecommand{\bysame}{\leavevmode\hbox to3em{\hrulefill}\thinspace}
\providecommand{\MR}{\relax\ifhmode\unskip\space\fi MR }
\providecommand{\MRhref}[2]{%
  \href{http://www.ams.org/mathscinet-getitem?mr=#1}{#2}
}
\providecommand{\href}[2]{#2}

\end{document}